\newcounter{sideremark}
\newtheorem{Thm}{Theorem}[section]
\newtheorem{Lem}[Thm]{Lemma}
\newtheorem{Prop}[Thm]{Proposition}
\newtheorem{Def}[Thm]{Definition}
\renewcommand{\marginpar}[1]{}
\def\Empty{}
\newcommand\oplabel[1]{
  \def\OpArg{#1} \ifx \OpArg\Empty {} \else
  	\label{#1}
  \fi}
\long\def\realfig#1#2#3#4{
\begin{figure}
  \centering
  \includegraphics{#2}
  \caption[#1]{#3}
  \label{#1}
\end{figure}}
\newcommand{\comm}[1]{}
\renewcommand{\epsilon}{\varepsilon}
\renewcommand{\rho}{\varrho}
\begin{document}

\title{Constructing monotone homotopies and sweepouts}

\author[E. Chambers]{Erin Wolf Chambers}
\address{Department of Computer Science, Saint Louis University, 220 N. Grand Ave., Saint Louis, MO, USA}
\email{echambe5@slu.edu}
\author[G.R. Chambers]{Gregory R. Chambers}
\address{Department of Mathematics, Rice University, 6100 Main St., Houston, TX, USA}
\email{gchambers@rice.edu}
\author[A. de Mesmay]{Arnaud de Mesmay}
\address{LIGM, CNRS, Univ. Gustave Eiffel, ESIEE Paris, F-77454 Marne-la-Vall\'{e}e, France}
\email{arnaud.de-mesmay@univ-eiffel.fr}
\author[T. Ophelders]{Tim Ophelders}
\address{Department of Mathematics and Computer Science, TU Eindhoven, The Netherlands}
\email{t.a.e.ophelders@tue.nl}
\author[R. Rotman]{Regina Rotman}
\address{Department of Mathematics, University of Toronto, 40 St. George Street, Toronto, ON M5S 2E4, Canada}
\email{rina@math.toronto.edu}

\date{\today}
\begin{abstract}
This article investigates when homotopies can be converted to monotone homotopies without increasing the lengths of curves.
A monotone homotopy is one which consists of curves which are simple or constant, and in which curves are pairwise disjoint.
We show that, if the boundary of a Riemannian disc can be contracted through curves of length less than $L$, then
it can also be contracted monotonically through curves of length less than $L$.  This proves a conjecture of Chambers and Rotman.  Additionally, any sweepout of a Riemannian $2$-sphere through curves of length less than $L$ can be replaced with a monotone sweepout through curves of length less than $L$.  Applications of these results are also discussed.
\end{abstract}
\maketitle

%\tableofcontents

%%%%%%%%%%%%%%%%%%%%%%%%%%%%%%%%%%%%%%%%%%%%%%%%%%%%%%%%%%%%%%%%%%%
\section{Introduction}
%%%%%%%%%%%%%%%%%%%%%%%%%%%%%%%%%%%%%%%%%%%%%%%%%%%%%%%%%%%%%%%%%%%

The primary objects of study in this article are \emph{monotone homotopies}, which we define below. Throughout the article, we consider closed curves on Riemannian surfaces. If $\alpha$ is a simple closed contractible curve, then $D(\alpha)$ denotes the closed disc that $\alpha$ bounds.
If the underlying surface has at least one boundary component, then this disc is unique.  If it is an oriented sphere, then the orientation of the sphere and the orientation of $\alpha$ determines $D(\alpha)$; it is the unique disc for which, given the orientation of the sphere, the induced orientation of the boundary agrees with that of $\alpha$. If $\alpha$ and $\beta$ are two simple closed contractible curves with $D(\beta) \subset D(\alpha)$, then let $A(\alpha, \beta) = A(\beta, \alpha)$ denote the annulus between $\alpha$ and $\beta$, that is, $D(\alpha)$ with the interior of $D(\beta)$ removed.  If $\beta$ is a constant curve, then we extend the definition of $A(\alpha, \beta)$ to denote $D(\alpha)$.

\begin{Def}
	Let $(M,g)$ be a Riemannian annulus with boundaries $\gamma_0$ and $\gamma_1$, and let $H: \mathbb{S}^1 \times [0,1] \rightarrow M$ be a homotopy between $\gamma_0$ and $\gamma_1$, that is, a smooth map such that $H(t,0)=\gamma_0$ and $H(t,1)=\gamma_1$. We will say that $H$ is \emph{monotone} if every intermediate curve $\gamma_{\tau} := H(t,\tau)$ is a simple closed curve parameterized by $t$, and if the closed $2$-annuli $A(\gamma_\tau,\gamma_1) \subseteq M$ satisfy the inclusion $A(\gamma_{\tau_2},\gamma_1) \subset A(\gamma_{\tau_1},\gamma_1)$ for every $\tau_1 < \tau_2$.  In this definition, $\gamma_0$ and $\gamma_1$ can be constant curves or simple closed curves.

	A \emph{monotone contraction} of a Riemannian 2-disc is a monotone homotopy from its boundary to a constant curve.  We say that such a monotone homotopy is \emph{outward} if $D(\gamma_0) \subset D(\gamma_1)$, and is called
	\emph{inward} if $D(\gamma_1) \subset D(\gamma_0)$.
\end{Def}

We prove the following two theorems, the first of which was a conjecture by Chambers and Rotman~\cite[Conjecture 0.2]{cr-mhcdrs-16}.
\begin{Thm}
	\label{thm:disc}
	Suppose that $(D,g)$ is a Riemannian disc, and suppose that there is a contraction of $\partial D$ through curves of length less than $L$.
	Then there is a monotone contraction of $\partial D$ through curves of length less than $L$.
\end{Thm}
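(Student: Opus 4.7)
The plan is to read off the monotone contraction directly from the given contraction by means of a scalar function $\tau\colon D \to [0,1]$ that records, for each point $p$, the latest time at which $p$ is swept over by the curve $\gamma_\tau$. The nested discs of the monotone contraction will then be the super-level sets $\{\tau \ge s\}$, parametrized by $s$.

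After a small smooth perturbation, I may assume that $H\colon \mathbb{S}^1 \times [0,1] \to D$ is smooth and transverse, so that for all but finitely many $\tau$ the curve $\gamma_\tau$ has only transverse self-intersections. Because $H(\cdot,1)$ is constant, $H$ descends to a map $F\colon D' \to D$ of degree one from the quotient disc $D' = (\mathbb{S}^1 \times [0,1])/(\mathbb{S}^1 \times \{1\})$, which restricts to the identity on $\partial D' = \partial D$; in particular, every $p \in D$ has at least one $F$-preimage. For each $p \in D$ set
\[
  \tau(p) := \sup\bigl\{t \in [0,1] : (x,t) \in F^{-1}(p) \text{ for some } x \in \mathbb{S}^1\bigr\},
\]
and let $D_s := \{p \in D : \tau(p) \ge s\}$ with $\alpha_s := \partial D_s$. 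By construction $D_{s_2} \subseteq D_{s_1}$ whenever $s_1 \le s_2$, with $D_0 = D$ and $D_1$ the collapse point of $\gamma_1$, so the family $\{\alpha_s\}$ is automatically monotone in the sense of the definition above.

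The length bound $|\alpha_s| < L$ reduces to showing $|\alpha_s| \le |\gamma_s|$. A generic boundary point $p \in \alpha_s$ satisfies $\tau(p) = s$, and since the supremum in the definition is attained, $p$ lies on $\gamma_s$; hence $\alpha_s \subseteq \gamma_s$. Moreover, distinct boundary points of $D_s$ correspond generically to distinct ``latest'' preimages on $\mathbb{S}^1 \times \{s\}$, so $\alpha_s$ is traced out by a disjoint union of arcs of $\gamma_s$ with each arc covered at most once; therefore $|\alpha_s| \le |\gamma_s| < L$.

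The main obstacle lies at the non-generic times $s$, where the topology of $D_s$ can change. At such times $\tau$ may have jump discontinuities, for instance when a small bubble of the homotopy collapses and a whole region of $\{\tau < s\}$ detaches from $D_s$; and $\alpha_s$ may acquire figure-eight singularities at self-tangencies of $\gamma_s$, or $D_s$ may split into several components. Handling these requires a careful genericity setup, a procedure for dealing with component splittings (where one must single out the ``outermost'' component and arrange that the remaining components are subsequently contracted monotonically), and a final smoothing/parametrization step producing a continuous monotone homotopy whose intermediate lengths remain strictly below $L$. Verifying that length is never created by these surgeries and that all resolutions can be performed continuously in $s$ is the heart of the argument.
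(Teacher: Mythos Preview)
Your approach is entirely different from the paper's, and it has a genuine gap at its key step.

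The claim that a generic boundary point $p\in\alpha_s$ satisfies $\tau(p)=s$, and hence that $\alpha_s\subseteq\gamma_s$, is false. The function $\tau$ is only upper semicontinuous, and $\partial D_s$ typically sits on its jump set---the \emph{envelope} (fold locus) of the family $\{\gamma_t\}_{t\ge s}$---rather than on the single curve $\gamma_s$. Along such an envelope the values of $\tau$ range over an interval, so for most $p$ on it one has $\tau(p)>s$ strictly while arbitrarily close points just outside have $\tau<s$; these $p$ lie in $\alpha_s$ but not on $\gamma_s$.

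A concrete instance: on the unit disc, let $\gamma_t$ first contract $\partial D$ to a circle of small radius $r$, then translate that circle so that its centre is at distance $R>r$ from the origin, then revolve the centre once around the origin, and finally contract to a point. For $s$ in the revolution phase, $D_s$ is (up to a small cap) the portion of the annulus $\{R-r\le |x|\le R+r\}$ still to be swept; its boundary consists largely of arcs of the two envelope circles $|x|=R\pm r$. These arcs are not contained in the single circle $\gamma_s$ of radius $r$, and their total length is of order $R$, not $r$. So the inequality $|\alpha_s|\le|\gamma_s|$ fails outright, and with it your length bound. This is not an isolated non-generic event repairable by perturbation: the envelope, and the failure, persist over an entire interval of $s$. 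The difficulties you defer to ``non-generic times'' are therefore already present at generic times.

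The paper's proof follows a completely different route. It first replaces the contraction by one through simple curves (via Chambers--Liokumovich) and discretizes it into a \emph{zigzag}: a finite alternating sequence of inward and outward monotone homotopies (Proposition~\ref{Prop:zigzag_reduction}). The heart of the argument is then an order-reduction scheme driven by \emph{minimizing geodesics} in the intermediate annuli (Theorem~\ref{thm:homotopyglue}, Propositions~\ref{Prop:zigzag_length_shorten} and~\ref{Prop:zigzag_area_extend}, Lemma~\ref{Lem:zigzag_modification}): one shows that a minimal-order zigzag from $\partial D$ to a point must have order one. The minimizing property is exactly what controls lengths when pieces of curves are spliced together; your super-level-set construction has no analogous mechanism, which is why the length bound escapes.
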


The techniques involved in the proof of this theorem also apply\footnote{The proof is even simpler in that case, since case $b$ of Proposition~\ref*{Prop:zigzag_area_extend} never occurs.} in the setting of a Riemannian annulus and a homotopy between its two boundaries through curves of length less than $L$, yielding a monotone homotopy through curves of length less than $L$.

The second theorem concerns a similar monotonicity result for sweepouts of $2$-spheres.  A sweepout of a Riemannian $2$-sphere is a map
$f: S^1 \times S^1 \rightarrow S^2$ of degree $1$. We can regard a sweepout as a $1$-parameter family of connected closed curves $f(t,\cdot)$
parametrized by $t \in S^1$.  These curves might have self-intersections as well as pairwise intersections.

\begin{Thm}
	\label{thm:sphere}
	Suppose that $(S^2, g)$ is a Riemannian $2$-sphere, and suppose that $f$ is a sweepout of it composed of curves of length less than $L$.
	Then there exists a diffeomorphism from the round sphere $(S^2, round) = \{ (x,y,z) : x^2 + y^2 + z^2 = 1 \}$ to $(S^2, g)$ such that the length of the image of each parallel $\{ (x,y,z) : z = constant \} \cap (S^2, round)$ is less than $L$.
\end{Thm}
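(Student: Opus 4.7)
The plan is to reduce Theorem~\ref{thm:sphere} to two applications of Theorem~\ref{thm:disc} by splitting the sphere along a suitable separating simple closed curve drawn from the sweepout. A diffeomorphism from $(S^2, \mathrm{round})$ with the parallel-length property is equivalent to a monotone sweepout of $(S^2, g)$: a family of pairwise-disjoint simple closed curves, degenerating at the endpoints to two pole points $N$ and $S$. Such a family is precisely the concatenation of two monotone contractions of a common equator $\sigma$, one in each of the two discs bounded by $\sigma$.

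To construct this from $f$, I would proceed in three stages. \emph{First}, I would modify $f$ so that at two times $t_N, t_S \in S^1$ the curves are constant at distinct points $N$ and $S$, without breaking the length bound. Since $t \mapsto \mathrm{length}(\gamma_t)$ is continuous on the compact $S^1$, the sweepout contains a shortest curve of length strictly less than $L$; such a curve has small diameter and can be contracted to a point through short curves inside a small ambient disc (or, more robustly, by applying the annulus version of Theorem~\ref{thm:disc} mentioned in the paper to a thin annular neighborhood of the shortest curve). \emph{Second}, with constants in place, the two arcs of $S^1$ from $t_N$ to $t_S$ each give a homotopy from $N$ to $S$ through short closed curves; after collapsing endpoints, each arc represents an element of $\pi_2(S^2) = \mathbb{Z}$, and these elements sum to the total degree $1$ of $f$. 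By transferring canceling null-homotopic pieces between the two arcs if necessary, we may assume one arc has degree $1$ and the other is null-homotopic rel endpoints, the latter being contractible through a trivial short family. \emph{Third}, on the degree-$1$ arc I would select a time $t_E$ at which $\gamma_{t_E}$ is a simple closed curve $\sigma$ (after a small length-nonincreasing perturbation if necessary), and let $D_N \ni N$ and $D_S \ni S$ be the two discs bounded by $\sigma$. The subfamily $[t_N, t_E]$ is then a contraction of $\sigma$ inside $D_N$ through curves of length $< L$, and likewise $[t_E, t_S]$ in $D_S$; Theorem~\ref{thm:disc} applied to each disc yields monotone contractions, which glue along $\sigma$ to form a monotone sweepout of $(S^2, g)$. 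Composing with any diffeomorphism $(S^2, \mathrm{round}) \to S^2$ sending parallels to this monotone family produces the diffeomorphism claimed.

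The principal obstacle lies in the two preparatory stages rather than in the final application of Theorem~\ref{thm:disc}. Inserting constant curves (Stage~1) and producing a simple separating curve in the sweepout (Stage~3) both require modifying the sweepout without exceeding the length bound, which is delicate when the ambient geometry of $(S^2, g)$ is uncontrolled; the latter step likely requires a Hass--Scott-style surgery to regularize a self-intersecting short curve into a simple short curve, while the former is the reason the paper alludes to the annulus version of Theorem~\ref{thm:disc} in its footnote.
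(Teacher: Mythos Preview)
Your reduction to Theorem~\ref{thm:disc} has a genuine gap in Stage~3. Even granting a simple closed curve $\sigma = \gamma_{t_E}$ and constant curves at $N \in D_N$ and $S \in D_S$, the subfamily $\{\gamma_t : t \in [t_N, t_E]\}$ is a homotopy in $S^2$, not in $D_N$: nothing prevents the intermediate curves from crossing $\sigma$ and wandering into $D_S$ (and back, many times). Theorem~\ref{thm:disc} requires a contraction of $\partial D$ \emph{inside} $D$, so it simply does not apply. This is not a technicality that a perturbation or a Hass--Scott surgery repairs; it is precisely the difficulty that distinguishes the sphere from the disc, since on the sphere there is no ``outside'' in which to confine the homotopy. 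Your Stage~1 has a related gap as written: the shortest curve in the family has length less than $L$, but $L$ is arbitrary, so there is no reason for that curve to have small diameter or to be contractible through short curves in a small ambient disc.

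The paper does not reduce to Theorem~\ref{thm:disc}. It first invokes Theorem~1.2 of \cite{CLoptimalsphere} to replace $f$ by a sweepout through simple and constant curves (which, incidentally, supplies the constant curves you were seeking in Stage~1), extracts a subinterval between two constant curves on which the restricted map has odd degree, and then runs the zigzag machinery directly on the sphere. It takes a minimal-order zigzag of odd degree between constant curves, uses the sphere-specific Proposition~\ref{Prop:sphere} in place of the ``$\gamma_0=\partial D$ forces $A(\gamma_1,\gamma_2)\subset A(\gamma_0,\gamma_1)$'' step of the disc proof, and then applies Lemma~\ref{Lem:zigzag_modification}. The odd-degree bookkeeping replaces your Stage~2 argument: if the lemma splits the zigzag into two pieces at an intermediate constant curve, one piece still has odd degree but strictly smaller order, contradicting minimality.
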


The proof of this result holds also if we assume only that there exists such a map of odd degree (which is not necessarily equal to $1$).

\subsection*{Background and related work}

These theorems have numerous applications to metric geometry, and to applied topology.  In terms of metric geometry, Theorem~\ref*{thm:disc} improves known estimates of the lengths of the shortest geodesics between pairs of points on Riemannian $2$-spheres from \cite{NR1} and \cite{NR2}.  In particular, the two authors prove that there are at least $k$ geodesics joining any two points on a Riemannian $2$-sphere of length at most $22kd$, where $d$ is the diameter of the $2$-sphere (if the two points agree, then this improves to $20kd$).  These results improve these bounds to $16kd$ and $14kd$ respectfully, and also greatly decrease the complexity of the proofs in \cite{NR1} and \cite{NR2}.

These results also allow the results from \cite{NR3} to be generalized to the free loop space of a Riemannian $2$-sphere; a map from $S^m \rightarrow \Lambda M$, where $M$ is a Riemannian $2$-sphere and $\Lambda M$ is the set of closed curves in $M$, can be homotoped to a map $\tilde{f}: S^m \rightarrow \Lambda M$  consisting of of curves of lengths bounded by $\rho(m,k,d)$, with $\rho$ being an explicit function, $d$ being the diameter of $M$, and $k$ being the number of distinct non-trivial periodic geodesics of $M$ with length at most $2d$.  In \cite{NR3}, Nabutovsky and Rotman proved the analogous statement for maps into the space of simple closed curves based at a fixed point; our results allow this restriction to be removed.  For more details, we refer to Chambers and Rotman~\cite[Section~0.1]{cr-mhcdrs-16}.  Of special note is that Theorem \ref*{thm:disc} directly  implies that, if the boundary of a Riemannian disc is contractible through curves of length less than $L$, then for any point $q$ on the boundary of the disc, the boundary is contractible to the point $q$ through loops based at $q$ of length less than $L + 2d$.  Here, $d$ is the diameter of the Riemannian $2$-disc.

The sweepouts described in Theorem \ref*{thm:sphere} appear in minimal surface and min-max literature.  In \cite{CLoptimalsphere}, Chambers and Liokumovich show that if there is a sweepout of a Riemannian $2$-sphere through curves of length less than $L$, then there is a sweepout of the same Riemannian $2$-sphere through simple closed curves and constant curves of lengths less than $L$.  They then use this result to answer a question of Freedman about the minmax levels with respect to different classes of sweepouts.  Our theorem is an improvement on this result, proving that such a sweepout can be simplified to not only consist of curves which do not have self-intersections (other than constant curves), but to consist of such curves which are (mostly) pairwise disjoint as well.

From the computational topology literature, much recent work has focused on computing a ``best" homotopy between two curves as a means of measuring similarity of the curves or determining optimal morphs between them~\cite{CVE08,cw2013,hnss-hwdmml-16}.  The main goal in this setting is to determine the computational complexity of such a problem in the most common settings, generally where the two curves are in the plane (possibly with obstacles) or on a meshed surface, as typically produced by surface reconstruction algorithms.

The type of optimality we study in this work has been investigated in a combinatorial setting, where it was called the ``height" of the homotopy~\cite{esa-hh2017,homotopyheight,hnss-hwdmml-16}, and in the graph theoretic setting, where it was called a ``b-northward migration"~\cite{Brightwell09submodularpercolation}.   However, the exact complexity of this problem remains open, and both papers include a conjecture that the best such morphings will proceed monotonically.  The monotonicity result we present in this paper is a key ingredient in showing that this problem lies in the complexity class $\mathcal{NP}$~\cite{cmo-coh-17}.

\realfig{figurecounter}{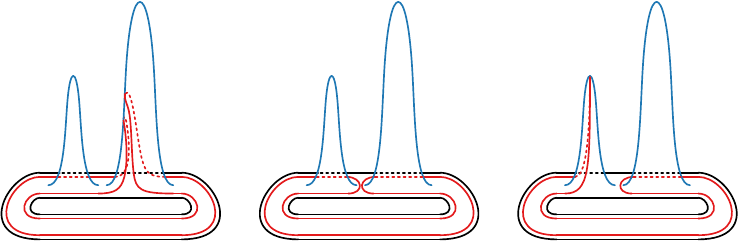}{A counter-example to Conjecture 0.3 of Chambers and Rotman~\cite{cr-mhcdrs-16}}{0.6\hsize}

Finally, Chambers and Rotman formulated another conjecture~\cite[Conjecture 0.3]{cr-mhcdrs-16} on monotonicity, where the initial curve is not the boundary of the disc. We say that a monotone contraction covers a simple closed curve $\gamma$ if $\gamma$ is contained in the disc which is the image of that contraction. They conjectured that if $M$ is a Riemannian surface and $\gamma$ a simple closed curve contractible through curves of length less than $L$, then there is a monotone contraction covering $\gamma$ through curves of length less than $L$. We observe that this conjecture is false by exhibiting the counter-example in Figure~\ref*{figurecounter}.

In that example, the underlying surface is an annulus, and the metric is the Euclidean one, except for two mountains, one taller than the other one. The initial closed curve $\gamma$ lies as shown in the first picture, half-way up the tall mountain from both sides. An optimal contraction is pictured in the following two pictures; it first climbs down the tall mountain \emph{for both sides of the curve} in order to reduce the length, before climbing over the smaller one. On the other hand, any monotone contraction covering $\gamma$ must start at a closed curve $\alpha$ that also lies half-way up the tall mountain from both sides. Then, by monotonicity, only one side of the curve can climb down the tall mountain. Therefore, the maximum length of the curves in such a monotone contraction will need to be strictly larger than for a non-monotone one.

%%%%%%%%%%%%%%%%%%%%%%%%%%%%%%%%%%%%%%%%%%%%%%%%%%%%%%%%%%%%%%%%%%%
\section{Preliminaries}
%%%%%%%%%%%%%%%%%%%%%%%%%%%%%%%%%%%%%%%%%%%%%%%%%%%%%%%%%%%%%%%%%%%

We begin by recalling several definitions.  A \emph{Riemannian disc} is a closed smooth $2$-dimensional Riemannian manifold with boundary that is diffeomorphic to a closed unit disc $D = \{ (x,y) \in \mathbb{R}^2: x^2 + y^2 \leq 1 \}$ with a smooth Riemannian metric; throughout this article, we will call this simply a ``disc".  A \emph{Riemannian annulus} is a $2$-dimensional smooth
Riemannian manifold with boundary that is diffeomorphic to an annulus $\{(x,y) \in \mathbb{R}^2 : 1 \leq x^2 + y^2 \leq 4 \}$ endowed with a smooth Riemannian metric; throughout this
article, we will refer to such a manifold with boundary simply as an ``annulus."  A closed curve in a smooth manifold with boundary $M$ is a smooth map from $S^1$ to  $M$; a simple closed
curve is a closed curve which is injective.  An arc in a smooth manifold with boundary $M$ is a smooth map from $[0,1]$ to $M$; an arc of a closed curve in $M$ is simply the restriction of the smooth map from $S^1$ to $M$ to a closed subinterval of $S^1$.  A homotopy of curves in a smooth manifold with boundary $M$ is a smooth map $H: [0,1] \times S^1 \rightarrow M$.  We remark that, at several points in this article, we will form a new curve taking a curve and replacing an arc of that curve with a new arc.  This will create at most two points which are not smooth, however, the resulting curve can be replaced by a smooth curve with length increased an arbitrarily small amount, and such that the new curve agrees with the old curve outside of
balls of arbitrarily small radii centered at the two singular points.  If the original curve is simple, then the new, smooth, curve is also simple.  In this article, we implicitly assume that
this smoothing procedure is executed whenever we execute such a cut-and-paste operation; we don't explicitly mention it to simplify the exposition.

Throughout the article, a closed curve $\gamma$ in a Riemannian annulus $A$ is called a \textit{minimizing geodesic} if it is \textit{essential} (homotopic to one of the boundaries), and its length is minimal among the essential curves.

\begin{Def}
	\label{Def:zigzag}
	A \emph{zigzag} $Z$ is a collection of homotopies $H_1, \dots, H_n$ with the following
	properties:
	\begin{enumerate}
		\item	$H_i(1) = H_{i+1}(0)$
		\item	$H_i$ alternates between outward and inward monotone homotopies, i.e., each of the $H_i$ is a monotone homotopy, but for any $i\in \{1, \ldots, n-1\}$, the concatenation of $H_i$ and $H_{i+1}$ is not.
	\end{enumerate}
	We define $\gamma_0 = H_1(0)$ and $\gamma_i = H_i(1)$ for $1 \leq i \leq n$.
	
	Each $H_i$ goes from $ \gamma_{i-1}$ to $\gamma_i$.
	We define the \emph{order} of $Z$, $ord(Z)$, to be $n$.

%	We define the \emph{area} of $Z$, $area(Z)$, to be $\sum_{i=1}^n area(A(\gamma_{i-1}, \gamma_i))$.

%	We define the \emph{length} of $Z$, $length(Z)$, to be $length(\gamma_0) + \sum_{i=1}^n length(\gamma_i)$.
\end{Def}

\realfig{figure0}{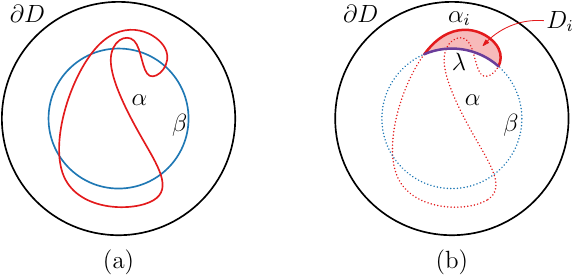}{Meandering curves}{0.7\hsize}

We will also need the following definitions and a theorem from the article of Chambers and Rotman~\cite{cr-mhcdrs-16}. 

\begin{Def} (\cite[Definition~0.6]{cr-mhcdrs-16})
	\label{Def:simplint}
	Let $\alpha:[0,1] \longrightarrow M$ and $\beta:[0,1] \longrightarrow M$ be two simple closed curves in a Riemannian manifold $M$. 
	If every two points of intersection between $\alpha$ and $\beta$ are consecutive on $\alpha$ if and only if they are consecutive on $\beta$, then $\alpha$ and $\beta$ are said to satisfy the simple intersection property. 
\end{Def}

When $\alpha$, $\beta$ defined in ~\ref*{Def:simplint} do not satisfy the simple intersection property, we will say that they are {\it meandering} with respect to each other.

%\begin{Def}
%      \label{Def:meandering}
%Let $\alpha(t):[0,1] \longrightarrow D$, $\beta(t):[0,1] \longrightarrow D$ be %simple
%closed curves in a topological $2$-disc $D$. Let $ t_1 < t_2 <...<t_i <...t_n$ %be an increasing
%sequence of points of $[0,1]$,  such that $\alpha(t^*)$ intersects $\beta(t)$ i%ff 
%all $t^*=t_i$ for some $i \in \{1,...,n\}$. Let $(s_i), i =1,...,n$ be a sequen%ce of points
%obtained from $(t_i)$ in such a way that $\alpha(t_i)=\beta(s_i)$. If $(s_i)$ is not increasing
%then we will say that $\alpha(t)$ and $\beta(t)$ are meandering with respect to each other, (see fig. ~\ref{figure0} (a) which depicts curves $\alpha(t)$ and $\beta(t)$ that are meandering with respect to each other).
%\end{Def}

\begin{Def}
\label{Def:corarc}
Let $\alpha$ and $\beta$ be two simple closed curves in a closed topological 
$2$-disc $D$. Let $\alpha_i=\alpha|_{[t_i,t_{i+1}]}$ be an arc of $\alpha$, such that
the interior of the arc does not intersect $\beta$, while its 
endpoints $\alpha(t_i), \alpha(t_{i+1}) \in \beta$. Then these points subdivide $\beta$ into 
two arcs. Let $\lambda$ be an arc that together with $\alpha_i$ bounds a disc in the closed annulus
$A(\partial D, \beta(t))$ between $\partial D$ and $\beta$. Then we will call $\lambda$ a {\it corresponding arc}. We will refer to the disc $D_i$ with the boundary $\alpha_i\cup \lambda$ as a {\it corresponding disc}. (See fig. ~\ref*{figure0} (b), where the disc that corresponds to arc $\alpha_i$ is shaded.)  Note that $\alpha$ and $\beta$ may intersect an infinite number
of times; this definition still holds.
\end{Def}

%% \begin{Def}
%% \label{Def:maxarc}
%% Let $\alpha$, $\beta$ be two simple closed curves in a closed topological $2$-disc $D$.
%% Suppose $\alpha$ is meandering with respect to $\beta$. We will call an arc $\alpha_i$
%% of $\alpha$ that intersects $\beta$ only at its endpoints \emph{maximal} if
%% it is adjacent to the outer face in the planar graph obtained by superimposing $\alpha$ onto $\beta$. 
%% \end{Def}

We first prove that any (non-monotone) homotopy can be approximated by a zigzag.

\begin{Prop}
	\label{Prop:zigzag_reduction}
	Suppose that there is a contraction of $\partial D$ through
	curves of length less than $L$. Then there exists a zigzag of order $n$ such that $\gamma_0 =
	\partial D$ and $\gamma_n$ is a constant curve, and such that
	all curves of all homotopies have length less than $L$.
\end{Prop}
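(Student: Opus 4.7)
The plan is to convert the given contraction $H:\mathbb{S}^1\times[0,1]\to D$ into a finite concatenation of monotone homotopies through curves of length less than $L$; grouping consecutive monotone pieces that share the same direction (inward or outward) will then produce a zigzag in the sense of Definition~\ref{Def:zigzag} from $\partial D$ to a point. I would first place $H$ in generic position, so that for all but finitely many critical times $\tau\in[0,1]$ the curve $H(\cdot,\tau)$ is a smooth immersion with only transverse double points, and each critical time hosts only one elementary Reidemeister-type event; a routine smoothing argument keeps all curves below length $L$.

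For each generic $\tau$, I would extract a simple closed curve $\sigma_\tau\subseteq H(\cdot,\tau)$ as the outer boundary of the compact region $H(\mathbb{S}^1\times[\tau,1])$ that remains to be swept, equivalently, the boundary of the component of $D\setminus H(\cdot,\tau)$ separating $\partial D$ from what remains to be contracted. Because $\sigma_\tau$ is composed of arcs of $H(\cdot,\tau)$, its length is below $L$, with $\sigma_0=\partial D$ and $\sigma_\tau$ collapsing to a point as $\tau\to 1$. Between two consecutive critical times the combinatorial type of $H(\cdot,\tau)$ is fixed, so $\sigma_\tau$ varies by an ambient isotopy in an annulus of $D$, and a direct check shows that this isotopy is in fact a monotone homotopy, either always inward or always outward. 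At each critical time $\tau_i$, the update of $\sigma_\tau$ corresponds to a local surgery along a maximal arc of $H(\cdot,\tau_i)$ in the sense of Definition~\ref{Def:maxarc}; using the corresponding-disc description of Definition~\ref{Def:corarc}, I would implement this surgery by a short monotone sub-homotopy that pushes $\sigma_\tau$ across the corresponding disc through curves consisting of arcs of $\sigma_\tau$ and a shrinking portion of the maximal arc.

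The main obstacle is precisely this length control at the critical times: every elementary surgery must be interpolated by a monotone sub-homotopy whose intermediate curves stay strictly below $L$. The role of the \emph{maximal} qualifier in Definition~\ref{Def:maxarc} is exactly to isolate an arc whose corresponding disc can be swept away through simple closed curves that are built from sub-curves of $H(\cdot,\tau_i)$, so that the bound transfers automatically to the interpolation. Concatenating the monotone pieces obtained on generic intervals with those from the critical times, and then merging consecutive pieces with the same direction (whose concatenation remains monotone), yields the desired zigzag from $\partial D$ to a constant curve. The technical heart of the proof is the combinatorial case analysis over the various elementary events, verifying simultaneously the monotone structure, the simplicity of the intermediate curves, and the strict length bound.
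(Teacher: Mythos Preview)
Your route is genuinely different from the paper's, and it has a real gap.

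The paper does not analyse the contraction $H$ directly. It first invokes the Chambers--Liokumovich theorem \cite[Theorem~1.1]{cl-chidhh-14} to replace $H$ by a contraction through \emph{simple} closed curves of length less than $L$. It then discretises: by compactness one can choose finitely many times $t_0<\dots<t_n$ so that consecutive curves either are disjoint and bound a $\delta$-thin annulus, or have the simple intersection property with $\delta$-thin corresponding discs. In each thin annulus or disc one builds a \emph{fresh} monotone homotopy (not extracted from $H$) whose curves differ from the boundary curves by at most $O(\delta)$, hence stay below $L$ for $\delta$ small. Concatenating these monotone pieces gives the zigzag. No case analysis of Reidemeister events is needed; the black-box simplification and the $\delta$-thin interpolation do all the work.

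Your argument tries to bypass both ingredients at once, and the key step does not hold. You assert that between two consecutive critical times ``$\sigma_\tau$ varies by an ambient isotopy in an annulus of $D$, and a direct check shows that this isotopy is in fact a monotone homotopy, either always inward or always outward.'' This is false in general. Already when $H(\cdot,\tau)$ is simple (so $\sigma_\tau=H(\cdot,\tau)$ and there are no critical times at all), a generic isotopy of a simple closed curve is not monotone: the discs $D(\sigma_\tau)$ need not be nested, and the curves $\sigma_{\tau_1}$, $\sigma_{\tau_2}$ will typically intersect. Fixing the combinatorial type of the self-intersections of $H(\cdot,\tau)$ does nothing to force nestedness of the outer cycles. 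Consequently the family $\{\sigma_\tau\}$ is only a homotopy through simple curves of length $<L$, i.e.\ exactly the output of Chambers--Liokumovich, and you still need a second step---discretisation plus monotone interpolation in thin regions---to obtain a zigzag. Once you add that step, the Reidemeister case analysis and the ``maximal arc'' surgeries become unnecessary: for $\delta$ small enough, any two $\delta$-close simple curves with the simple intersection property can be joined by a short zigzag through curves of length $<L$, which is what the paper does.
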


	\begin{proof}
First, by a result of Chambers and
Liokumovich~\cite[Theorem~1.1]{cl-chidhh-14}, we know that there exists a contraction of $\partial D$ through
\textit{simple} closed curves of length less than $L$.

We say that a corresponding disc between two arcs $\alpha$ and $\alpha'$ is $\delta$-thin if the homotopic Fr\'echet distance between the two curves is less than $\delta$, where the homotopic Fr\'echet distance between two curves is defined by considering all homotopies $H: [0,1] \times [0,1] \rightarrow D$ from $H(\cdot,0) = \alpha$ to $H(\cdot,1) = \alpha'$ (up to reparametrizations), and taking  $\inf_{H} \sup_{s \in [0,1]} \text{length}(H(s,\cdot))$.  In other words, for each homotopy from $\alpha$ to $\alpha'$ (allowing reparametrizations), we consider the length of the longest curve $H(s, \cdot)$ in that homotopy; taking the infimum of
this quantity over all of these homotopies yields the homotopic Fr\'echet distance~\cite{CVE08}.
% is a reparametrization of $\alpha$ such that $\alpha(t)$ and $\alpha'(t)$ are at distance at most $\delta$ for any $t \in [0,1]$. 
Similarly, an annulus $A(\alpha, \beta)$ is $\delta$-thin if the two boundary curves have homotopic Fr\'echet distance less than $\delta$. Now, we consider a discretized contraction $H$; we consider an increasing sequence of $n$ values $t_1, \dots, t_n$ in $[0,1]$ so that

\begin{itemize}
	\item $t_0=0$ and $H(0) = \partial D$, 
	\item $H$ on $[t_i, t_{i+1}]$ is a homotopy through curves of length less than $L$,
	\item $t_n=1$ and $H(1)$ is a constant curve, and
	\item for $0 \leq i \leq n-1$, if $H(t_i)$ and $H(t_{i+1})$ intersect, they have the simple intersection property and the corresponding discs are $\delta$-thin, for $\delta$ to be determined later. If they do not intersect, the annulus $A(H(t_i),H(t_{i+1}))$ is $\delta$-thin.
\end{itemize}

We begin with the original contraction $\tilde{H}$ produced by the theorem of Chambers and Liokumovich. Without loss of generality, we may assume that the only constant curve in the contraction occurs at $t = 1$.  If this is not the case, then we choose $t_0$ to be the smallest element of $[0,1]$ such that $\tilde{H}(t_0)$ is
a constant curve, and we apply the rest of the argument to the contraction formed by restricting $\tilde{H}$ to $[0,t_0]$.

Next, we select $t^* \in (0,1)$ sufficiently large so that $\tilde{H}(t^*)$
can be contracted in $D(\tilde{H}(t^*))$ through disjoint closed curves of length less than $L$, which are all simple except for the final constant curve.
Let this contraction of $\tilde{H}(t^*)$ be denoted by $K: [t^*,1] \times S^1 \rightarrow D$.

Since $H$ is smooth and the interval $[0,t^*]$ is compact, there is a $\delta > 0$ and an $\epsilon > 0$ such that, for every $s_1, s_2 \in [0,t^*]$, if $|s_1 - s_2| < \epsilon$, then $\tilde{H}(s_2)$ is contained in the $\delta$-tubular
neighborhood of $\tilde{H}(s_1)$.  Furthermore, there are parametrizations $\gamma_1$ and $\gamma_2$ 
of $\tilde{H}(s_1)$ and $\tilde{H}(s_2)$, respectfully, such that $\gamma_2(x) = \gamma_1(x) + f(x) v(x)$, where $v(x)$ is the outward unit vector to $\gamma_1$, and $f(x)$ is a real number with $|f(x)| < \delta$ (note that outward is with respect to $D(\gamma_1)$).  Due to this property, if $\tilde{H}(s_1)$ and $\tilde{H}(s_2)$ do not intersect, then $A(\tilde{H}(s_1), \tilde{H}(s_2))$ is $\delta$-thin, and if they do intersect, then they have the simple intersection property, and the corresponding discs are all $\delta$-thin.

To complete the proof, let $n = \lceil{\frac{t^*}{\epsilon}}\rceil$, and take our discretized sequence to be $t_i = i \frac{t^*}{n}$ for $i \in \{ 0, \dots, n\}$, and the contraction $H$ is defined
as $H = \tilde{H}$ on $[t_i, t_{i+1}]$ for all $i \in \{ 0, \dots, n-1 \}$.  Since $K$ is monotone, we can find a sufficiently large positive integer $m$ so that setting $t_i = t^* + (i - n) \frac{1 - t^*}{m}$ for $i \in \{ n, \dots, n + m \}$, and setting $H = K$ on $[t_i, t_{i+1}]$ for $i \in \{ n, \dots, n + m - 1 \}$ completes the proof.

Now, if $H(t_i)$ and $H(t_{i-1})$ intersect, for each $0<i<n$, we define an auxiliary curve $H(t_i)^f$ from  $H(t_i)$: $H(t_i)^f$ is obtained from $H(t_i)$ by considering all of the arcs of $H(t_i)$ in $D(H(t_{i-1}))$ and replacing the other ones by the arcs they correspond to in $H(t_{i-1})$. Then we claim that there are monotone homotopies between $H(t_i)$ and $H(t_i)^f$, and between $H(t_i)^f$ and $H(t_{i+1})$ such that the intermediate curves have length less than $L$. Indeed, one can go from one to the other using monotone homotopies that interpolate within the corresponding discs, and if $\delta$ is chosen small enough, this interpolation can be done with an arbitrarily small overhead on the lengths of the curves. If $H(t_{i})$ and $H(t_{i-1})$ do not intersect, for $\delta$ small enough, the $\delta$-thin assumption implies that there exists a monotone homotopy between $H(t_{i-1})$ and $H(t_i)$, such that 
 the intermediate closed curves have length less than $L$.

Gluing together all of these monotone homotopies, we obtain a zigzag with curves of length at most $L$.
\end{proof}

One of our main technical tools is a technique to modify a monotone homotopy when it crosses a minimizing geodesic. The general setting is when we have a monotone homotopy $H$ between two curves $\alpha_0$ and $\alpha_1$, and $\alpha$ is a third simple closed curve that is a minimizing geodesic in $A(\alpha,\alpha_0)$. Then we can use $\alpha$ to ``shortcut'' the homotopy $H$. Depending on the relative positions of $\alpha_1$, there are three variants of this shortcutting argument, leading to three different outcomes: they are summarized in the following proposition.

\realfig{figure3}{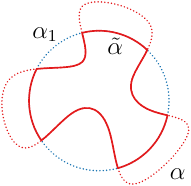}{Construction of $\tilde{\alpha}$.}{0.7\hsize}

\begin{Prop}\label{Prop:homotopyglue}
  Let $H$ be a monotone homotopy between simple closed curves $\alpha_0$ and $\alpha_1$ such that the intermediate curves have length less than $L$ and let $\alpha$ be another simple closed curve, disjoint from $\alpha_0$ and such that $\alpha$ is a minimizing geodesic in $A(\alpha_0,\alpha)$. Then:
  \begin{enumerate}
  \item If $\alpha$ is entirely contained and essential in $A(\alpha_0,\alpha_1)$, then there exists a monotone homotopy between $\alpha$ and $\alpha_1$ where the intermediate curves have length less than $L$.
  \item If $\alpha$ is entirely contained and non-essential in $A(\alpha_0,\alpha_1)$, then there exists a monotone homotopy between $\alpha$ and a constant curve $p$ where $p$ is a point on $\alpha$, and where the intermediate curves have length less than $L$.
  \item If $\alpha$ has the simple intersection property with $\alpha_1$, then if we denote by $\tilde{\alpha}$ the curve obtained from $\alpha_1$ by replacing segments of $\alpha_1$ in $A(\alpha_0,\alpha)$ with the corresponding arcs (see Figure~\ref*{figure3}), there exists a monotone homotopy between $\alpha$ and $\tilde{\alpha}$ where the intermediate curves have length less than $L$.
  \end{enumerate}
\end{Prop}

Although not explicitly stated in Chambers and Rotman~\cite{cr-mhcdrs-16}, the first case of this Proposition is implicit in the proof of their Theorem 0.7. More precisely, their proof is divided in two steps, and this is the result obtained in Step 1. The proof of the other two cases of Proposition~\ref*{Prop:homotopyglue} also follows closely the arguments of the proof of Theorem~0.7. For the sake of completeness, we include the full proof below.

\begin{proof}
  The general idea is that in all three settings one can take each intermediate curve of the homotopy $H$ and replace the portions that are outside of the target annulus using $\alpha$. More precisely, if we denote by $(\alpha_t)_{t\in [0,1]}$ the curves of the homotopy $H$, we do the following in each case:

  \begin{enumerate}
\item Let $t_0$ denote the first time $t$ that $\alpha_t$ intersects $\alpha$. Then for all $t \geq t_0$, we replace in each curve $\alpha_t$ the segments that are in the interior of the annulus $A(\alpha_0,\alpha)$ with their corresponding arcs and consider the family $A$ of the closed curves $(\alpha_t)_{t\in [t_0,1]}$.
\item Let $t_0$ and $t_1$ denote respectively the first and last time $t$ that $\alpha_t$ intersects $\alpha$. Then for all $t_0\leq t \leq t_1$, we replace in each curve $\alpha_t$ the segments that are in the interior of the annulus $A(\alpha_0,\alpha)$ with their corresponding arcs. Note that $\alpha_{t_1}$ is a contractible curve of $\alpha$ (viewed as a set), and can thus be contracted to a point $p \in \alpha$ while monotonically decreasing its length. This contraction is realized through curves $(\alpha_t)_{t\in [t_1,1]}$. We now consider the family $A$ of closed curves $(\alpha_t)_{t\in [t_0,1]}$.
\item Let $t_0$ denote the first time $t$ that $\alpha_t$ intersects $\alpha$. Then for all $t \geq t_0$, we replace in each curve $\alpha_t$ the segments that are in the interior of the annulus $A(\alpha_0,\alpha)$ with their corresponding arcs and consider the family $A$ of the closed curves $(\alpha_t)_{t\in [t_0,1]}$. Note that the new $\alpha_1$ coincides with $\tilde{\alpha}$.
  \end{enumerate}

 Now, the rest of the proof is the same in all three cases. Since $\alpha$ is minimizing in $A(\alpha_0,\alpha)$, the corresponding arcs are always shorter than the arcs that they replace. Thus, the families $A$ that we obtain contain intermediate curves of length less than $L$. Furthermore, $\alpha_{t_0}$ and $\alpha_1$ are the starting and ending curves of the target homotopy in all three cases. However, the families $A$ fail to be monotone homotopies because they are neither homotopies (there can be discontinuities) nor monotone (the curves are not even simple). The first issue is solved by interpolation and the second one by perturbation.

  \realfig{figure4}{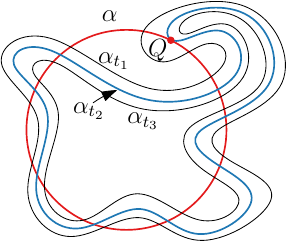}{Discontinuities might occur...}{0.7\hsize}

\realfig{figure5}{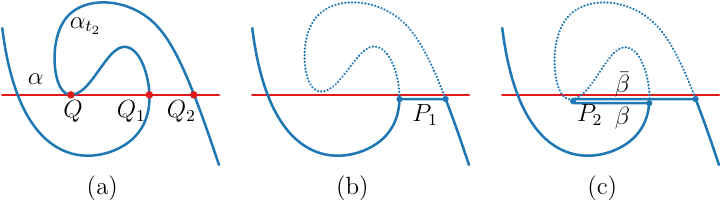}{... but they can be fixed by interpolating with a path homotopy.}{0.6\hsize}

Discontinuities only appear at times $t$ when the intersection between
$\alpha_t$ and $\alpha$ is not transversal. Figure~\ref*{figure4}
depicts such a situation.  Here $\alpha_{t_2}$ touches $\alpha$ at
point $Q$. There are two ways to replace the segments of
$\alpha_{t_2}$ in the neighborhood of $Q$, (see Figure~\ref*{figure5}
(a) that depicts this situation locally).  One way is to replace the
segment of $\alpha_{t_2}$ that connects the points $Q_1$ and $Q_2$
that lies in the annulus $A(\alpha_0,\alpha)$  by the path $P_1$,
(see Figure~\ref*{figure5} (b)). Let us call this replacement the type
1 replacement.  Another way is depicted in Figure~\ref*{figure5}
(c). Here we replace the segment of $\alpha_{t_2}$ that connects $Q_1$
and $Q_2$ by $P_2$.  $P_2$ is a path that consists of two paths: the
first one replaces the segment of $\alpha_{t_2}$ that connects $Q$ and
$Q_2$, while the second one, $\beta$, replaces the segment of
$\alpha_{t_2}$ that connects $Q_1$ and $Q$. Let us call this
replacement the type 2 replacement. Since our procedure only replaces
segments in the \emph{interior} of the annulus $A(\alpha_0,\alpha)$
with their corresponding arcs, it always chooses the type 2
replacement.

Now, there is only one way to replace the relevant part of $\alpha_{t_1}$, with a curve that
is close to $\alpha_{t_2}$ and is contained in $A(\alpha_0,\alpha_{t_2})$ (Figure~\ref*{figure4}).  If we want the procedure to result
in a homotopy, this fits well with our choice of type 2 replacement on
$\alpha_{t_2}$. On the other hand, there is also only one type of replacement
that can be performed on $\alpha_{t_3}$,  with a curve that is close to
$\alpha_{t_2}$ and is contained in $A(\alpha_{t_2},\alpha_1)$. And as $t_3$ goes to $t_2$, this converges into a type 1 replacement for
$\alpha_{t_2}$. Hence, we have a discontinuity at $t_2$. To avoid
this discontinuity, note that $P_2=\beta *\bar{\beta}*P_1$ (see
Figure~\ref*{figure5} (c)). Here, $\bar{\beta}$ denotes the path $\beta$
traversed in the opposite direction, and $a * b$ denotes the curve formed by concatenating the
curves $a$ and $b$. Therefore, $P_1$ and $P_2$ can be
connected by the obvious length non-increasing path homotopy, which
amounts to contracting $\beta*\bar{\beta}$ to $Q_1$. This path
homotopy extends to a homotopy between the two curves derived from the two replacement choices for
$\alpha_{t_2}$. Thus, including the homotopy between the two
different resulting curves corresponding to type 1 and type 2 replacements solves the discontinuity problem at time $t_2$. Doing so for each time $t$ when the intersection between $\alpha_t$ and $\alpha$ is not transverse makes $A$ into a homotopy between $\alpha_{t_0}$ and $\alpha_1$.

Finally, observe that while the curves in the homotopy $A$ may not be simple, they do not feature transversal intersections, since the shortcutting procedure replaces all the segments outside of the annulus $A(\alpha_0,\alpha)$ by their corresponding arcs. Furthermore, since $H$ was a monotone homotopy, there was no transverse intersection between $\alpha_t$ and $\alpha_{t'}$  for $t\neq t'$ before the replacement procedure, and thus by the same argument there are none afterwards either. Now, by applying an arbitrarily slight perturbation to all of the curves in $A$ in a continuous way, we can make all the curves simple while still having no transverse intersection pairwise and all having length less than $L$. This yields a monotone homotopy and concludes the proof.
\end{proof}

The proof of Theorem~\ref*{thm:disc} relies on Propositions~\ref*{Prop:zigzag_length_shorten} and~\ref*{Prop:zigzag_area_extend}, which allow us to modify small portions of zigzags. The first one follows rather directly from Proposition~\ref*{Prop:homotopyglue}, but the second one requires more work. The proof of Theorem~\ref*{thm:sphere} relies on Proposition~\ref*{Prop:zigzag_area_extend} and a small variant of Proposition~\ref*{Prop:zigzag_length_shorten}, which is stated in Proposition~\ref*{Prop:sphere}.

\begin{Prop}
	\label{Prop:zigzag_length_shorten}
	Suppose that $Z$ is an order 2 zigzag through curves of length less than $L$.  If $\gamma_1$ is not a minimizing geodesic in 
	$A(\gamma_1, \gamma_2)$, and if a minimizing geodesic $\gamma$ in this annulus also lies
	in the interior of $A(\gamma_0, \gamma_1)$ and is essential in it, then there is a zigzag $Z'$ where the intermediate curves have length less than $L$ and such that
	\begin{enumerate}
		\item	$ord(Z') = 2$.
		\item	$\gamma_1'$ minimizes in $A(\gamma_1', \gamma_2')$.
		\item	$\gamma'_0 = \gamma_0$, $\gamma_1' = \gamma$, and $\gamma'_2 = \gamma_2$.
	\end{enumerate}

	Suppose that $Z$ is an order $2$ zigzag where the intermediate curves have length less than $L$, and that $\gamma_0$ is a minimizing geodesic
	in $A(\gamma_1, \gamma_2)$, or that $\gamma_2$ is a minimizing geodesic in $A(\gamma_0, \gamma_1)$.
	Then there is an order $1$ zigzag $Z'$ (i.e., a monotone homotopy) through curves of length less than $L$ and such that $\gamma'_0 = \gamma_0$, and $\gamma'_1 = \gamma_2$.
\end{Prop}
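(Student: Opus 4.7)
The plan is to use Theorem~\ref{thm:homotopyglue} as a black box applied to each monotone piece of the zigzag, exploiting the fact that reversing the time parameter of a monotone homotopy again yields a monotone homotopy, so that the theorem may be applied at either endpoint. A useful observation throughout is that if $\gamma$ is a minimizing geodesic in an annulus $A$, then it remains a minimizing geodesic in any sub-annulus of $A$ that has $\gamma$ as a boundary, because every essential curve of the sub-annulus is still essential in $A$ and hence has length at least that of $\gamma$.

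For the first assertion, I would assume without loss of generality that $H_1$ is outward and $H_2$ is inward, so $D(\gamma_0) \subset D(\gamma_1)$ and $D(\gamma_2) \subset D(\gamma_1)$ (the other case is entirely symmetric). Since $\gamma$ minimizes in $A(\gamma_1,\gamma_2)$ and $A(\gamma,\gamma_1) \subset A(\gamma_1,\gamma_2)$, the curve $\gamma$ also minimizes in $A(\gamma,\gamma_1)$, and since $\gamma$ is essential in $A(\gamma_0,\gamma_1)$ by hypothesis, Theorem~\ref{thm:homotopyglue} applied to $H_1$ yields a monotone homotopy $H_1'$ from $\gamma_0$ to $\gamma$ through curves of length less than $L$. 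Reversing $H_2$ to obtain a monotone homotopy from $\gamma_2$ to $\gamma_1$ and applying Theorem~\ref{thm:homotopyglue} a second time (using $A(\gamma,\gamma_1) \subset A(\gamma_2,\gamma_1)$, so that $\gamma$ minimizes there too), then reversing the output, yields a monotone homotopy $H_2'$ from $\gamma$ to $\gamma_2$, again through curves shorter than $L$. The inclusions $D(\gamma_0) \subset D(\gamma)$ and $D(\gamma_2) \subset D(\gamma)$ force $H_1'$ to be outward and $H_2'$ to be inward, so $Z' = (H_1', H_2')$ is genuinely an order-$2$ zigzag; condition (2) holds because $\gamma$ minimizes in $A(\gamma,\gamma_2) \subset A(\gamma_1,\gamma_2)$.

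For the second assertion, suppose first that $\gamma_0$ is a minimizing geodesic in $A(\gamma_1,\gamma_2)$ (the case where $\gamma_2$ is minimizing in $A(\gamma_0,\gamma_1)$ is entirely symmetric with the roles of $H_1$ and $H_2$ swapped). Being essential in $A(\gamma_1,\gamma_2)$ forces $D(\gamma_2) \subset D(\gamma_0) \subset D(\gamma_1)$, so $A(\gamma_0,\gamma_1) \subset A(\gamma_1,\gamma_2)$ and $\gamma_0$ is also a minimizing geodesic in $A(\gamma_0,\gamma_1)$. I then reverse $H_2$ to get a monotone homotopy from $\gamma_2$ to $\gamma_1$ and apply Theorem~\ref{thm:homotopyglue} with $\gamma_0$ as the target minimizer; the output is a monotone homotopy from $\gamma_2$ to $\gamma_0$ through curves shorter than $L$, which upon reversal is the desired single monotone homotopy from $\gamma_0$ to $\gamma_2$.

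The main obstacle here is not mathematical but logistical: one has to track which annulus contains the distinguished minimizing curve, which endpoint of each monotone piece plays the role of ``$\gamma_0$'' in the statement of Theorem~\ref{thm:homotopyglue}, and in which time-direction each homotopy is read. Once the inclusions and orientations are tabulated, every invocation of Theorem~\ref{thm:homotopyglue} is immediate, and the remaining verifications --- that the curves along $H_1'$ and $H_2'$ are simple and pairwise nested, and that the two pieces of $Z'$ alternate --- follow automatically from the inclusions established above.
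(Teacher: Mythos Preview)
Your proposal is correct and follows essentially the same approach as the paper: apply Theorem~\ref{thm:homotopyglue} once to each monotone piece of the zigzag (reversing time where necessary) to reroute the homotopies through the minimizing curve. Your write-up is in fact more careful than the paper's in checking that the hypotheses of Theorem~\ref{thm:homotopyglue} are met in each application, and in verifying that the two new pieces really alternate direction.
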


\begin{proof}
The first part of the proposition follows from two applications of Proposition~\ref*{Prop:homotopyglue}(1). We first apply it to the reversal of the homotopy $H_1$ and the curve $\gamma$, and then to the homotopy $H_2$ and the curve $\gamma$. This yields two new homotopies $H'_0$ and $H'_1$, going respectively from $\gamma_0$ to $\gamma$ and from $\gamma$ to $\gamma_2$; their concatenation satisfies the needed properties.

For the second part of the proposition, let us first deal with the first case where $\gamma_0$ is a minimizing geodesic in $A(\gamma_1,\gamma_2)$. Then one application of Proposition~\ref*{Prop:homotopyglue}(1) to the homotopy $H_2$ and $\gamma_0$ yields the homotopy from  $\gamma_0$ to $\gamma_2$. The other case is obtained by applying the theorem to the reversal of $H_1$ and $\gamma_2$ instead.
\end{proof}

\begin{Lem}
	\label{Lem:minimizer_simple}
	Suppose that $Z$ is a zigzag of order $2$, and that $\gamma_0$ is a minimizer in $A(\gamma_0, \gamma_1)$.
	Then there exists an essential curve $\gamma$ which is a minimizer in $A(\gamma_1, \gamma_2)$, and which
	has the simple intersection property with $\gamma_0$.
\end{Lem}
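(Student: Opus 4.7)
Proof plan: The approach will be an extremal surgery argument. Among all minimizing geodesics in $A(\gamma_1,\gamma_2)$ that are transverse to $\gamma_0$, let $\gamma$ be one that minimises the number of transverse intersections $|\gamma \cap \gamma_0|$. We will show that such a $\gamma$ automatically has the simple intersection property with $\gamma_0$.

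Suppose, towards a contradiction, that $\gamma$ and $\gamma_0$ meander. Then there exist four intersection points $p_1,p_2,p_3,p_4$ whose cyclic order along $\gamma$ differs from their cyclic order along $\gamma_0$. By analysing the planar graph $\gamma \cup \gamma_0$ and invoking the notion of maximal arc from Definition~\ref{Def:maxarc}, we will locate a subarc $\alpha \subset \gamma$ and a subarc $\beta \subset \gamma_0$ sharing the same pair of endpoints, such that after smoothing the two corners the two swapped simple closed curves
\[
\gamma' = (\gamma \setminus \alpha) \cup \beta, \qquad \gamma_0' = (\gamma_0 \setminus \beta) \cup \alpha
\]
are essential in $A(\gamma_1,\gamma_2)$ and $A(\gamma_0,\gamma_1)$ respectively.

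The key observation is the length identity
\[
|\gamma'| + |\gamma_0'| = |\gamma| + |\gamma_0|.
\]
Combined with the minimality of $\gamma$ in $A(\gamma_1,\gamma_2)$ (which gives $|\gamma'| \geq |\gamma|$) and of $\gamma_0$ in $A(\gamma_0,\gamma_1)$ (which gives $|\gamma_0'| \geq |\gamma_0|$), this forces equality, and in particular $\gamma'$ is also a minimizer in $A(\gamma_1,\gamma_2)$. Replacing $\gamma'$ by a smooth geodesic representative of the same length in its free homotopy class, we obtain a minimizer with at least two fewer intersections with $\gamma_0$ (the two shared endpoints of $\alpha$ and $\beta$ having disappeared after smoothing, and no new intersections introduced if $\beta$ is pushed off $\gamma_0$ on the correct side), contradicting the minimal choice of $\gamma$.

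The main obstacle will be the topological verification of essentiality for both swapped curves. This will require a case analysis on the type of the zigzag (outward--inward versus inward--outward) and on the relative position of $D(\gamma_0)$ and $D(\gamma_2)$ inside $D(\gamma_1)$, in order to guarantee that the arc $\alpha$ can be chosen to lie simultaneously in $A(\gamma_1,\gamma_2)$ and in $A(\gamma_0,\gamma_1)$, and that $\beta$ does likewise. The planar nature of the ambient disc, together with the meandering hypothesis, should supply such an exchangeable pair by an outermost-arc argument on the planar intersection graph of $\gamma$ and $\gamma_0$; getting the essentiality on both sides simultaneously (rather than just one) is the delicate combinatorial point.
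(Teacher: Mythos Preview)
Your surgery-plus-extremal strategy is in the same spirit as the paper's proof, but the paper packages the argument asymmetrically and thereby avoids exactly the ``delicate combinatorial point'' you flag and leave open. Instead of the symmetric swap and the identity $|\gamma'|+|\gamma_0'|=|\gamma|+|\gamma_0|$, the paper only modifies the minimizer $\alpha$ in $A(\gamma_1,\gamma_2)$: it takes an arc $\rho\subset\gamma_0$ whose interior lies in $A(\gamma_1,\alpha)$, forms the essential curve $\beta_1\subset A(\gamma_1,\alpha)$ from $\rho$ and part of $\alpha$, and shows $|\beta_1|\le|\alpha|$ using only the minimality of $\gamma_0$ in $A(\gamma_0,\gamma_1)$ (each arc of $\alpha$ entering $A(\gamma_0,\gamma_1)$ is at least as long as its corresponding arc on $\gamma_0$, since swapping that single arc into $\gamma_0$ produces an essential competitor). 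Iterating removes all meandering arcs. No second curve $\gamma_0'$ is ever needed, so there is nothing to verify about essentiality ``on both sides simultaneously''.

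As written, your argument has two genuine gaps. First, for the length identity to bite you need $\gamma_0'=(\gamma_0\setminus\beta)\cup\alpha$ to lie in $A(\gamma_0,\gamma_1)$ and be essential there, and $\gamma'=(\gamma\setminus\alpha)\cup\beta$ to lie in $A(\gamma_1,\gamma_2)$ and be essential there; but $\beta\subset\gamma_0$ need not lie in $D(\gamma_2)$, so $\gamma'$ need not be contained in $A(\gamma_1,\gamma_2)$ at all. You acknowledge this but do not resolve it, and the outermost-arc heuristic you sketch does not obviously deliver both containments at once. Second, the termination step is shaky: after the swap $\gamma'$ shares an entire arc with $\gamma_0$, so it is not transverse to $\gamma_0$, and ``replacing by a smooth geodesic representative of the same length'' is not a free move---perturbing to transversality may increase length and destroy minimality, while passing to a genuine geodesic in the free homotopy class gives no a priori control on the intersection number with $\gamma_0$. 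The paper's iterative construction sidesteps this by never appealing to an extremal intersection count.
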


\begin{proof}[Proof]
	We begin by choosing an essential minimizing curve $\alpha$ in $A(\gamma_1,\gamma_2)$.
%  Consider an	arc $\rho$ of $\alpha$ which lies in $A(\gamma_0, \gamma_1)$, and with endpoints on $\gamma_0$.
	Let $\rho$ be a segment of $\gamma_0$ whose endpoints are intersections between $\alpha$ and
	$\gamma_0$, and whose interior is contained in the interior of $A(\gamma_1, \alpha)$.  Let
	the endpoints of $\rho$ be $\rho(0)$ and $\rho(1)$.

	From $\alpha$ and $\rho$ we can define two auxiliary curves: one which goes from $\rho(0)$ to $\rho(1)$ following $\alpha$ and then
	back to $\rho(0)$ along $\rho$, and one which goes from $\rho(1)$ to $\rho(0)$ following $\alpha$ and then back to $\rho(1)$
	along $\rho$.  Let the first curve be $\beta_1$, and let the other one be $\beta_2$.  Note that both
	$\beta_1$ and $\beta_2$ are contained in $A(\gamma_1, \alpha)$ and are simple closed curves. Thus, in this annulus, exactly one of $\beta_1$ or $\beta_2$ is essential; without loss of generality, we may assume
	that it is $\beta_1$. Since $\beta_2$ is not essential, it bounds a disc within $A(\gamma_1, \alpha)$ which we call by a slight abuse of language its \emph{interior}. If $\rho$ lies in the boundary of
	a portion of the interior of $\beta_2$ outside of $A(\gamma_0, \gamma_1)$, as in the middle picture of Figure~\ref*{figurerho} then we do nothing.

	If $\rho$ lies on the boundary of a portion of the interior of $\beta_2$ inside of 
	$A(\gamma_0, \gamma_1)$, as in the right picture of Figure~\ref*{figurerho}, we claim that $\beta_1$ has total length not greater than that of $\alpha$. Indeed, let us first build an auxiliary closed curve in the following way: take all the segments $\Sigma$ of $\beta_2$ that lie in the interior of $A(\gamma_0,\gamma_1)$ and have their endpoints on $\gamma_0$. The segments in $\Sigma$ are also segments of $\alpha$, and since $\gamma_0$ is a minimizing geodesic in $A(\gamma_0,\gamma_1)$, any $\sigma \in \Sigma$ is at least as long as its corresponding arc on $\gamma_0$. The closed curve $\alpha'$ is obtained by replacing all the segments in $\Sigma$ from $\alpha$ by their corresponding arcs on $\gamma_0$; it is not longer than $\alpha$. Now, $\alpha'$ may not be simple, in particular there may be double points on $\rho(0)$ or $\rho(1)$. Such a double point $\alpha'(t_1)=\alpha'(t_2)$ for $t_1 \neq t_2$ cuts $\alpha'$ into two subcurves, one of which, say $\alpha'_{|[t_1,t_2]}$, is contractible in $A(\gamma_1,\gamma_2)$. One can shortcut $\alpha'$ even more by removing such a contractible portion, i.e., replacing $\alpha'$ by the closed subcurve $\alpha'_{|[t_2,t_1]}$. After removing all these contractible subcurves, we obtain the curve $\beta_1$, which is by construction not longer than $\alpha'$ and thus not longer than $\alpha$.

	\realfig{figurerho}{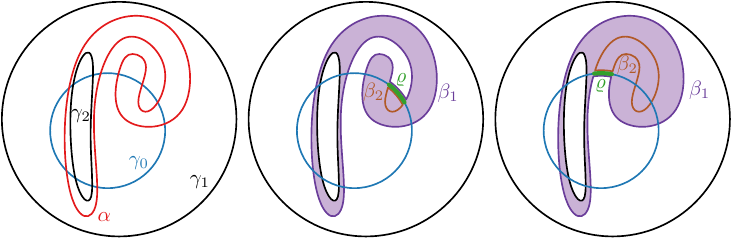}{The different curves in the proof of Lemma~\ref*{Lem:minimizer_simple}. For the $\rho$ in the middle diagram, we do nothing, while for the $\rho$ in the right diagram we can shortcut $\alpha$ by replacing it with $\beta_1$.}{0.6\hsize}

	We now build $\gamma$ in the following way.  Since $\gamma_0$ has bounded length, there are countably many segments of $\gamma_0$ which satisfy the above properties.  Let these segments be $\rho_1, \rho_2, \dots$; we apply the above procedure to $\alpha = \omega_0$ and $\rho_1$ to obtain a curve $\omega_1$.
If a segment of $\gamma_0$ satisfies the above properties with respect to $\omega_1$, then it also satisfies those properties with respect to
$\omega_0$ ($=\alpha$).  If $\rho_2$ is still one of these segments, then we apply the procedure to $\omega_1$ and $\rho_2$ to form $\omega_2$.  We continue to do this for all
$\rho_i$ to form a sequence of curves $\omega_1, \omega_2, \dots$.  All of these curves are minimizers in $A(\gamma_1, \gamma_2)$, and so all lie in this
annulus, all are smooth, and all have length bounded by $L$.  By the Arzel\`{a}-Ascoli theorem, there is a curve to which these curves converge; we let this curve be
$\gamma$.  $\gamma$ is still a minimizer in $A(\gamma_1, \gamma_2)$, and $\gamma$ has the simple intersection property with $\gamma_0$.  If it did
not have the simple intersection property with $\gamma_0$, then there is a segment $\rho$ of $\gamma_0$ whose endpoints are also in $\gamma$, whose interior is contained
in the interior of $A(\gamma_1, \gamma)$, and which produces a $\beta_1$ and a $\beta_2$ which are in the second case.  However, then
$\rho = \rho_i \in \{ \rho_1, \rho_2, \dots \}$, and so $\rho = \rho_i$ would not satisfy the above properties with respect to $\omega_i$, and so it would also
not satisfy the above properties with respect to $\gamma$, yielding a contradiction.
\end{proof}

\begin{Prop}
	\label{Prop:zigzag_area_extend}
	Suppose that $Z$ is a zigzag of order $3$ where the intermediate curves have length at most $L$ and such that $\gamma_1$ is a minimizing geodesic in $A(\gamma_1, \gamma_2)$, but is not a constant curve.

	Then one of the following two cases is true:
	
	{\bf Case a.} There is a zigzag $Z'$ of order $3$ such that
	\begin{enumerate}
		\item	$\gamma_0' = \gamma_0$, $\gamma_3' = \gamma_3$, and $\gamma_2' = \gamma_2$.
		\item	There exists a minimizing geodesic $\gamma \in A(\gamma_2', \gamma_3')$ which is fully contained in the interior of $A(\gamma_1', \gamma_2')$.
		\item	$\gamma_1'$ is a minimizing geodesic in $A(\gamma_1', \gamma_2')$.
		\item	All curves in $Z'$ have length less than $L$.
	\end{enumerate}

	{\bf Case b.} There exists a zigzag $Z'$ of order $1$ such that
	\begin{enumerate}
		\item	$\gamma_0' = \gamma_0$.
		\item	$\gamma_1'$ is a constant curve.
		\item	All curves in $Z'$ have length less than $L$.
	\end{enumerate}

\end{Prop}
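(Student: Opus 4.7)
The approach is to apply Lemma~\ref{Lem:minimizer_simple} to the order-$2$ sub-zigzag $(H_2,H_3)$, since its ``$\gamma_0$'' can be identified with our $\gamma_1$, which by hypothesis is a minimizing geodesic in $A(\gamma_1,\gamma_2)$. This produces an essential curve $\alpha$ which is a minimizing geodesic in $A(\gamma_2,\gamma_3)$ and has the simple intersection property with $\gamma_1$.

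The main case (Case a) is when $\alpha$ has at least one maximal arc lying in the interior of $A(\gamma_1,\gamma_2)$. In this case, I would analyze the decomposition of $\alpha$ into maximal arcs cut off by $\gamma_1$ (as in Definition~\ref{Def:maxarc}) and build $\gamma_1'$ from $\gamma_1$ by replacing the arcs of $\gamma_1$ whose corresponding discs sit on the $\gamma_2$-side of $\alpha$ by the corresponding arcs of $\alpha$. Since $\gamma_1$ is minimizing in $A(\gamma_1,\gamma_2)$, each such replacement does not increase length, so $\gamma_1'$ has length less than $L$; it is simple and essential, sits in $A(\gamma_0,\gamma_1)$ by construction, and is itself minimizing in the sub-annulus $A(\gamma_1',\gamma_2)$. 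A small inward perturbation of $\alpha$ then yields the curve $\gamma$ required by condition~(2): it is a minimizing geodesic in $A(\gamma_2,\gamma_3)$ and sits strictly in the interior of $A(\gamma_1',\gamma_2)$. Finally, Theorem~\ref{thm:homotopyglue} applied to $H_1$ with target $\gamma_1'$ supplies the first monotone homotopy, while the original $H_2$ restricted between $\gamma_1'$ and $\gamma_2$ and the original $H_3$ give the remaining two pieces, assembling $Z'$.

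Case b arises when every maximal arc of $\alpha$ lies outside $A(\gamma_1,\gamma_2)$, i.e.\ $\alpha$ is contained in $\gamma_1$ together with the component of $D\setminus \gamma_1$ not containing $\gamma_2$. In this degenerate situation, I would iterate the previous procedure on nested minimizing geodesics inside $D(\gamma_1)$, using Theorem~\ref{thm:homotopyglue} to chain them together, to produce a single monotone homotopy from $\gamma_1$ to a constant curve through curves of length less than $L$; concatenated with $H_1$, this gives the required order-$1$ zigzag from $\gamma_0$ to a point.

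The main obstacle is Case a. One must verify that the arc-swap produces a single simple closed curve which is a minimizing geodesic in $A(\gamma_1',\gamma_2)$, that it also lies in $A(\gamma_0,\gamma_1)$ and is minimizing in $A(\gamma_1',\gamma_1)$ so that Theorem~\ref{thm:homotopyglue} applies to $H_1$, and that the strict interior containment of $\gamma$ inside $A(\gamma_1',\gamma_2)$ can be achieved simultaneously. The careful bookkeeping of which arcs to swap and in which direction, combined with the orientation dichotomy of the underlying zigzag (inward/outward/inward versus outward/inward/outward), is the technical heart of the argument and is where the $\delta$-thinness ideas from the proof of Proposition~\ref{Prop:zigzag_reduction} are likely to reappear to secure the required length bounds.
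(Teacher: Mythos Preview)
Your opening move is right: apply Lemma~\ref{Lem:minimizer_simple} to the sub-zigzag $(H_2,H_3)$ to obtain a minimizer $\alpha$ in $A(\gamma_2,\gamma_3)$ having the simple intersection property with $\gamma_1$. From there, however, the construction goes astray.

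The central problem is your plan to obtain $H_1'$ by applying Theorem~\ref{thm:homotopyglue} to $H_1$ with target $\gamma_1'$. For that theorem to apply, $\gamma_1'$ must lie in $A(\gamma_0,\gamma_1)$, the image of $H_1$. But look at what Property~(2) of Case~a forces: the minimizer $\gamma\in A(\gamma_2,\gamma_3)$ must be contained in the \emph{interior} of $A(\gamma_1',\gamma_2)$, which means $D(\gamma_1')$ must sit strictly inside $D(\gamma)$. Since in the interesting case $\gamma$ crosses $\gamma_1$ and has arcs inside $D(\gamma_1)$, the new curve $\gamma_1'$ is forced to dip \emph{into} $D(\gamma_1)$, not stay in $A(\gamma_0,\gamma_1)$. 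So $\gamma_1'$ simply does not lie in the domain swept by $H_1$, and Theorem~\ref{thm:homotopyglue} cannot furnish the first homotopy. (For the same reason, ``$H_2$ restricted between $\gamma_1'$ and $\gamma_2$'' is not meaningful: $\gamma_1'$ is not a level curve of $H_2$.) You flag this as something to verify, but it is not a detail to be checked; it is structurally impossible in general.

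The idea you are missing is that the extra piece of homotopy from $\gamma_1$ down to $\gamma_1'$ must come from $H_3$, not from $H_1$ or $H_2$. The paper takes the portion of $H_3$ running from $\gamma_2$ to $\gamma$ and, curve by curve, replaces every arc lying in $A(\gamma_1,\gamma_2)$ by the corresponding arc of $\gamma_1$ (this is exactly the Chambers--Rotman Theorem~0.7 cut-and-paste, and is where the careful handling of non-transversal moments occurs). Because $\gamma_1$ is minimizing in $A(\gamma_1,\gamma_2)$, the replacements do not increase length, and the result is a monotone homotopy $\tilde H$ from $\gamma_1$ to $\tilde\gamma=\gamma_1'$, entirely inside $D(\gamma_1)$. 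One then sets $H_1'=H_1\ast\tilde H$ and $H_2'=\tilde H^{-1}\ast H_2$, keeping $H_3'=H_3$. Your Case~b is likewise off: in the paper this case is when $\gamma$ lies in $A(\gamma_1,\gamma_2)$ but is \emph{non-essential} there, so $D(\gamma)$ is disjoint from $D(\gamma_1)$ and $H_3$ sweeps all of $D(\gamma_1)$; the same clip-to-$\gamma_1$ trick applied to $H_3$ then contracts $\gamma_1$ to a point. The vague ``iterate on nested minimizing geodesics'' does not produce this.
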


\begin{proof}[Proof]

We begin by applying Lemma \ref*{Lem:minimizer_simple} to the order $2$ zigzag from $\gamma_1$ to
$\gamma_2$ to $\gamma_3$ to obtain an essential minimizing geodesic $\gamma$ in $A(\gamma_2, \gamma_3)$ which
has the simple intersection property with $\gamma_1$.

If $\gamma$ lies in $A(\gamma_1, \gamma_2)$, and is essential in this annulus, then we are done as
Case a is satisfied.

We now divide the remainder of the proof into two cases:
\begin{itemize}
  \item[(i)] If $\gamma$ is not entirely contained in $A(\gamma_1, \gamma_2)$, we will show that case a. holds.
  \item[(ii)] If $\gamma$ is contained in $A(\gamma_1, \gamma_2 )$, and is non-essential
there, we will show that case b. holds.
\end{itemize}

\noindent{\bf Case (i)}
Suppose that $\gamma$ is not entirely contained in $A(\gamma_1, \gamma_2)$.  Note that one can always modify the homotopy
$H_3$ to obtain a new monotone homotopy $H_3'$ between $\gamma_2$ and $\gamma_3$, such that the lengths of the curves in $H_3'$ is less than $L$ and $\gamma$ is one of the curves of $H_3'$.  
One achieves this by applying Proposition~\ref*{Prop:homotopyglue}(1) to the reversal of the homotopy $H_3$ and $\gamma$, and then to $H_3$ and $\gamma$ and concatenating the two resulting homotopies. Thus, without loss of generality, we assume that $\gamma = (H_3)_t$ for
some $ t \in [0,1]$. 

%Then, let $X$ be all of the points in time such that $\gamma_t \in H_3$ is a minimizing geodesic in $A(\gamma_2, \gamma_3)$.  If $\gamma$ is not equal to $\gamma_{\sup X}$,
%then replace it with this curve.  Since $\gamma$ is not contained in $A(\gamma_1, \gamma_2)$, $\gamma_{\sup X}$ is also not contained in $A(\gamma_1, \gamma_2)$.  As such, without loss of generality,
%let us assume that $\gamma = \gamma_{\sup X}$. 

Since $\gamma$ is not entirely contained in $A(\gamma_1, \gamma_2)$, the new zigzag $Z'$ is obtained in the following manner. We define a new curve $\tilde{\gamma}$ by replacing the segments of $\gamma$ in $A(\gamma_1,\gamma_2)$ with the corresponding segments of $\gamma_1$. Then, using Proposition~\ref*{Prop:homotopyglue}(3) on $H_3$ and $\gamma_1$, we form an auxiliary monotone homotopy $\tilde{H}$ from $\gamma_1$ to $\tilde{\gamma}$.

%% By Lemma~\ref*{lemma:curve_shortening}, the length
%% of $\tilde{\gamma}$ is no larger than the length of $\gamma_1$, and is no longer than the length of $\gamma$.

 To form our new homotopy, we append the homotopy $\tilde{H}$ to the end of $H_1$,
and we append $\tilde{H}$ in reverse direction to the beginning of $H_2$.  Clearly, properties $1$ and $4$ are satisfied.

Property $2$ follows from the fact that $\gamma$ and $\gamma_1$ have the simple intersection property. 

To prove property $3$, we fix any essential curve $\gamma'$ in $A(\gamma_1', \gamma_2')$.  If we replace segments of $\gamma'$ which lie in $A(\gamma_1, \gamma_2)$ with segments of $\gamma_1$,
then replace segments of the resulting curve which lie in $A(\gamma_2', \gamma)$ with segments of $\gamma$, we obtain $\tilde{\gamma} = \gamma_1'$.  This procedure does not increase the length,
so this shows that the length of $\gamma'$ is greater than or equal to the length of $\gamma_1'$, completing the proof.

\medskip

\noindent {\bf Case (ii)} Suppose that $\gamma$ is contained in
$A(\gamma_1, \gamma_2)$, and is non-essential in it.  In this case,
the disc bounded by $\gamma$ does not intersect the disc bounded by
$\gamma_1$. Thus, by monotonicity, the homotopy $H_3$ ``sweeps''
$D(\gamma_1)$ completely. Thus we are in the situation to apply case
(2) of Proposition~\ref*{Prop:homotopyglue} to the homotopy $H_3$ and
$\gamma_1$. This yields a homotopy $\tilde{H}$ between $\gamma_1$ and
a point $p$ on $\gamma_1$. We then concatenate $\tilde{H}$ to the end
of $H_1$ to form $H'_1$. Clearly, both properties are satisfied.
\end{proof}

\begin{Prop}
	\label{Prop:sphere}
	Suppose that $Z$ is a zigzag of order $2$ on a Riemannian sphere such that all curves have length less than $L$,
	and such that $\gamma_0$ is a constant curve, but $\gamma_1$ and $\gamma_2$ are not constant curves.  Furthermore,
	assume that the orientation of the sphere is such that the discs bounded by curves close to $\gamma_0$ in the first monotone homotopy
	are close to the image of $\gamma_0$.
	Then there exists an order $2$ zigzag $Z'$ where $\gamma'_0$ is a constant curve in $D(\gamma_1)$, $\gamma_2' = \gamma_2$, and $\gamma_1'$ is a minimizing geodesic
	in $A(\gamma_1', \gamma_2')$.
\end{Prop}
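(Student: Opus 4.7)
The plan is to identify a minimizing geodesic $\gamma$ in the annulus $A(\gamma_1,\gamma_2)$ and then case-split according to whether the constant curve $\gamma_0$ lies in the disc $D(\gamma)$, with the orientation chosen so that $D(\gamma_2)\subseteq D(\gamma)\subseteq D(\gamma_1)$. Such a $\gamma$ exists because $\gamma_1$ and $\gamma_2$ are non-constant essential curves bounding the annulus, and the choice of orientation of $D(\gamma)$ is possible because $\gamma$ separates $\gamma_1$ from $\gamma_2$. The curve $\gamma$ is automatically minimizing in every sub-annulus of $A(\gamma_1,\gamma_2)$ that has $\gamma$ as a boundary, since essential curves in such a sub-annulus remain essential in $A(\gamma_1,\gamma_2)$. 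By applying Theorem~\ref{thm:homotopyglue} to $H_2$ and to its reverse, each time using $\gamma$ as the minimizing curve, I obtain monotone homotopies from $\gamma_1$ to $\gamma$ and from $\gamma$ to $\gamma_2$, both through curves of length less than $L$.

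If $\gamma_0\in D(\gamma)$, I apply Theorem~\ref{thm:homotopyglue} once more to $H_1$ and $\gamma$; the hypotheses hold because $\gamma\subseteq A(\gamma_0,\gamma_1)=D(\gamma_1)$ and $\gamma$ is minimizing in $A(\gamma,\gamma_1)$. Concatenating the resulting monotone homotopy from $\gamma_0$ to $\gamma$ with the monotone homotopy from $\gamma$ to $\gamma_2$ constructed above produces an order-$2$ zigzag $Z'$ with $\gamma_0'=\gamma_0$, $\gamma_1'=\gamma$, and $\gamma_2'=\gamma_2$: the two pieces alternate direction (outward from the constant $\gamma_0\in D(\gamma)$ to $\gamma$, then inward from $\gamma$ to $\gamma_2\subseteq D(\gamma)$), and $\gamma_1'=\gamma$ is a minimizing geodesic in $A(\gamma_1',\gamma_2')=A(\gamma,\gamma_2)$. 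This establishes the first statement.

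If instead $\gamma_0\in A(\gamma,\gamma_1)$, I aim for the second statement. Denoting the intermediate curves of $H_2$ by $\alpha_\tau$, and observing that $D(\alpha_\tau)$ shrinks continuously from $D(\gamma_1)\ni\gamma_0$ to $D(\gamma_2)\not\ni\gamma_0$ (the latter because $\gamma_0\notin D(\gamma)\supseteq D(\gamma_2)$), there is a time $\tau^*$ at which the curve $\alpha_{\tau^*}$ passes through $\gamma_0$. The restriction $H_2|_{[\tau^*,1]}$, viewed using the disc on the side opposite to $D(\alpha_\tau)$ for each $\tau$ (these complementary discs grow monotonously from a disc having $\gamma_0$ on its boundary to one containing $\gamma_0$ in its interior), is a monotone outward homotopy from $\alpha_{\tau^*}$ to $\gamma_2$ through curves of length less than $L$. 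To promote this to an order-$1$ zigzag starting from the constant curve $\gamma_0$, I prepend a monotone contraction of $\alpha_{\tau^*}$ down to the point $\gamma_0$ on it, built from $H_1$ concatenated with $H_2|_{[0,\tau^*]}$ (which together form an order-$2$ zigzag from $\gamma_0$ to $\alpha_{\tau^*}$) by adapting the replacement-and-interpolation technique of Case~b of Proposition~\ref{Prop:zigzag_area_extend}: intermediate-curve segments that fall on the wrong side of $\gamma_0$ are swapped with shorter corresponding arcs along nearby boundary curves.

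The main obstacle is this last step in the second case, namely producing the monotone contraction of $\alpha_{\tau^*}$ to the boundary point $\gamma_0$ while maintaining all intermediate curves of length less than $L$ and handling continuity at non-transverse intersections. This requires the same careful, case-by-case replacement argument used in Proposition~\ref{Prop:zigzag_area_extend}, specialized to the sphere setting where one ``boundary'' of the annular region under consideration has degenerated to the single point $\gamma_0$.
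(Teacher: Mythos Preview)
Your first case (when $\gamma_0\in D(\gamma)$) is correct and coincides with the paper's argument: two applications of Theorem~\ref{thm:homotopyglue} give the order-$2$ zigzag with $\gamma_1'=\gamma$ minimizing.

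In the second case, however, your construction has a genuine gap and diverges from the paper. You try to keep the \emph{same} constant curve $\gamma_0'=\gamma_0$ and build a monotone homotopy from $\gamma_0$ to $\alpha_{\tau^*}$ by ``adapting'' the replacement technique of Case~b of Proposition~\ref{Prop:zigzag_area_extend}. But that technique only controls lengths because the curve along which one replaces arcs is a \emph{minimizing geodesic} in the relevant annulus: that is precisely why the swapped-in arcs are no longer than the arcs they replace. In your setup neither $\alpha_{\tau^*}$ nor any other curve you name is known to be minimizing, so the assertion that the swapped arcs are ``shorter'' is unjustified. Moreover, the phrase ``the wrong side of $\gamma_0$'' has no meaning since $\gamma_0$ is a point; there is no curve here to cut the surface into two sides. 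As written, you have reduced the problem to monotonizing an order-$2$ zigzag $\gamma_0\to\gamma_1\to\alpha_{\tau^*}$ with no minimizing curve available, which is essentially the original difficulty.

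The paper avoids this entirely by working with $H_1$ and the minimizing geodesic $\gamma$ itself, and by allowing the new constant curve to be a different point $p\in\gamma$ (the statement only requires \emph{some} constant curve, not $\gamma_0$). Since $\gamma_0\notin D(\gamma)$ while $D(\gamma)\subset D(\gamma_1)$, the outward homotopy $H_1$ must sweep across $\gamma$; let $p$ be the first point of $\gamma$ it meets. Replacing the segments of the curves of $H_1$ that lie in $A(\gamma,\gamma_1)$ by the corresponding arcs of $\gamma$ is length-nonincreasing because $\gamma$ minimizes in $A(\gamma,\gamma_1)\subset A(\gamma_1,\gamma_2)$, and this produces a monotone homotopy from $p$ to $\gamma$. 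Concatenating with the portion of $H_2$ from $\gamma$ to $\gamma_2$ (after arranging, as in Proposition~\ref{Prop:zigzag_area_extend}, that $\gamma$ occurs as a curve of $H_2$) yields the desired order-$1$ zigzag from the constant curve $p$ to $\gamma_2$.
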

\begin{proof}
	Let $\gamma$ be a minimizing geodesic in $A(\gamma_1, \gamma_2)$.  There are two possibilities:
	\begin{enumerate}
		\item	$\gamma_0$ is contained in both $D(\gamma_1)$ and $D(\gamma)$.
		\item	$\gamma_0$ is contained in exactly one of $D(\gamma_1)$ and $D(\gamma)$.
	\end{enumerate}
	
	If the first condition is true, then the conclusion follows from Proposition \ref*{Prop:zigzag_length_shorten}.
	If the second condition is true, then $H_1$ ``sweeps" $D(\gamma)$, and we apply case (2) of Proposition~\ref*{Prop:homotopyglue} to the reversal of $H_1$ and $\gamma$ to obtain a monotone homotopy from $\gamma$ to a point. Furthermore, applying case (1) of Proposition~\ref*{Prop:homotopyglue} to $H_2$ and $\gamma$ yields a monotone homotopy from $\gamma$ to $\gamma_2$. Concatenating these gives the result.
\end{proof}

%%%%%%%%%%%%%%%%%%%%%%%%%%%%%%%%%%%%%%%%%%%%%%%%%%%%%%%%%%%%%%%%%%%
\section{Proof of Theorems \ref*{thm:disc} and \ref*{thm:sphere}}
%%%%%%%%%%%%%%%%%%%%%%%%%%%%%%%%%%%%%%%%%%%%%%%%%%%%%%%%%%%%%%%%%%%

We first find a zigzag
which starts at the boundary of the Riemannian disc, ends at a constant curve, and traverses
curves of length less than $L$ which minimizes the order of the zigzag.

%\begin{Def}
%	\label{Def:set_zigzags}
%	Let $\mathcal{Z}_L$ denote the set of all zigzags that
%	start at $\partial D$, end at a constant curves, and pass through
%	curves of length less than $L$.

%	Let \[\mathfrak{L} = \inf_{Z \in \mathcal{Z}_L} length(Z),\]
%	and let 
%	\[ \mathcal{Z}_{L, Length} = \{ Z \in \mathcal{Z}_L : length(Z) = \mathfrak{L} \}. \]

%	Let $$\mathfrak{A} = \sup_{Z \in \mathcal{Z}_{L, Length}} Area(Z), $$
%	and let
%	$$ \mathcal{Z}_{L, Length, Area} = \{ Z \in \mathcal{Z}_{L, Length}: Area(Z) = \mathfrak{A} \}. $$
%\end{Def}

\begin{Prop}
	\label{Prop:exists}
	Suppose that there exists a contraction of $\partial D$ through curves of length
	less than $L$.  Then there is a zigzag $Z$ of finite order, which consists of curves of length less than $L$,
	and which begins on $\partial D$, and ends at a point.  Furthermore, for every zigzag $\tilde{Z}$ with these
	properties, the order of $\tilde{Z}$ is greater than or equal to the order of $Z$.
\end{Prop}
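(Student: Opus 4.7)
The plan is straightforward and relies on two ingredients: the existence statement from Proposition~\ref{Prop:zigzag_reduction}, and the well-ordering of the positive integers.

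First, I would invoke Proposition~\ref{Prop:zigzag_reduction} directly to produce at least one zigzag $Z_0$ beginning at $\partial D$, ending at a constant curve (which can be identified with a point), passing through curves of length less than $L$, and of some finite order $n_0$. This establishes that the collection
\[
\mathcal{Z} := \{\, \tilde Z : \tilde Z \text{ is a zigzag through curves of length } < L \text{ from } \partial D \text{ to a point}\,\}
\]
is nonempty, and that the set of orders
\[
\mathcal{N} := \{\, ord(\tilde Z) : \tilde Z \in \mathcal{Z}\,\} \subset \mathbb{Z}_{>0}
\]
is a nonempty subset of the positive integers.

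By the well-ordering principle, $\mathcal{N}$ has a minimum element, call it $n$. By definition of $\mathcal{N}$, there exists at least one zigzag $Z \in \mathcal{Z}$ with $ord(Z) = n$. Choose any such $Z$. Then $Z$ satisfies all the requirements of the proposition: it has finite order, its intermediate curves have length less than $L$, it begins at $\partial D$ and ends at a point, and by the minimality of $n$, every other zigzag $\tilde Z \in \mathcal{Z}$ satisfies $ord(\tilde Z) \geq n = ord(Z)$. There is no genuine obstacle here; the only subtlety is that one must remember the existence step is nontrivial and was already handled in Proposition~\ref{Prop:zigzag_reduction}, while the minimization step is purely formal.
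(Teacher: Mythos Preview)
Your proposal is correct and follows exactly the paper's own argument: existence comes from Proposition~\ref{Prop:zigzag_reduction}, and minimality is the well-ordering of the positive integers. There is nothing to add.
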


The proof follows directly from Proposition~\ref*{Prop:zigzag_reduction}, and from the fact that the order of a zigzag
is a positive integer.  We will need one more lemma before we can prove our two theorems.

\begin{Lem}
	\label{Lem:zigzag_modification}
	Suppose that $Z$ is a zigzag of order $n \geq 2$ through curves of length less than $L$, and suppose that at most
	the initial and final curves are constant.  Suppose further that $\gamma_1$ is a minimizing geodesic in $A(\gamma_1, \gamma_2)$.
	Then one of the following is true.  First, there exists a zigzag $Z'$ of order $n$ with $\gamma_0' = \gamma_0$ and $\gamma_n' = \gamma_n$, 
	every $\gamma_i$ is a minimizing geodesic in $A(\gamma_i, \gamma_{i+1})$ for all $i \in \{1, \dots, n-1\}$, and some minimizing geodesic in $A(\gamma_{i+1}, \gamma_{i+2})$ is contained and essential
	in $A(\gamma_i, \gamma_{i+1})$ for all $i \in \{ 1, \dots, n - 2 \}$.  Second, there exist two zigzags, $Z'_1$ and $Z'_2$ of orders $m_1 > 0$ and $m_2 > 0$
	with $m_1 + m_2 = n$ through curves of length less than $L$, and such that the first curve of $Z'_1$ is equal to the first curve of $Z$, the last curve of
	$Z'_2$ is equal to the last curve of $Z$, the last curve of $Z'_1$ is a constant curve, and the first curve of $Z'_2$ is equal to the same constant curve. 
\end{Lem}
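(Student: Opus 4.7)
My approach is iterative along the zigzag, propagating the minimizing-geodesic property one position at a time using Propositions~\ref{Prop:zigzag_area_extend} and~\ref{Prop:zigzag_length_shorten}. Either the iteration runs to completion, giving conclusion~(1), or at some step case~b of Proposition~\ref{Prop:zigzag_area_extend} fires, enabling the split of conclusion~(2).

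Concretely, I would maintain the invariant that at the start of step $i \in \{1,\dots,n-2\}$, the current modified zigzag satisfies: $\gamma_j$ is a minimizing geodesic of $A(\gamma_j,\gamma_{j+1})$ for every $j \leq i$, and for every $j \leq i-1$ some minimizer of $A(\gamma_{j+1},\gamma_{j+2})$ is contained and essential in $A(\gamma_j,\gamma_{j+1})$. The case $i = 1$ is exactly the lemma's hypothesis. To advance from $i$ to $i+1$, I apply Proposition~\ref{Prop:zigzag_area_extend} to the order-3 sub-zigzag $(\gamma_{i-1},\gamma_i,\gamma_{i+1},\gamma_{i+2})$; its hypothesis holds because $\gamma_i$ minimizes in the annulus to its right by the invariant and is non-constant (only initial and final curves are initially constant, and the modifications have replaced intermediate curves only by other minimizers, which are non-constant). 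In case~a, the proposition provides a minimizer $\sigma$ of $A(\gamma_{i+1},\gamma_{i+2})$ lying essentially in the interior of $A(\gamma_i,\gamma_{i+1})$; the first part of Proposition~\ref{Prop:zigzag_length_shorten}, applied to the order-2 sub-zigzag $(\gamma_i,\gamma_{i+1},\gamma_{i+2})$, then replaces $\gamma_{i+1}$ by $\sigma$ (if it is not already minimal). This establishes the invariant at $i+1$: $\sigma$ minimizes in $A(\sigma,\gamma_{i+2})$, and $\sigma$ itself, as a boundary component of $A(\gamma_i,\sigma)$, supplies the required contained-essential minimizer for the new pair $j = i$.

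If case~b of Proposition~\ref{Prop:zigzag_area_extend} fires at step $i$, I produce conclusion~(2) as follows. The case~b output is an order-1 monotone contraction from $\gamma_{i-1}$ to a constant curve $p$, which, concatenated with the first $i-1$ (modified) homotopies of the current zigzag, yields $Z_1'$ of order $m_1 = i$ from $\gamma_0$ to $p$. For $Z_2'$, I use the auxiliary monotone homotopy $\tilde H$ built in the proof of case~b, which runs from $\gamma_{i+1}$ monotonously to $p$; its reverse $\tilde H^{-1}$ is a monotone homotopy from $p$ to $\gamma_{i+1}$. Concatenating $\tilde H^{-1}$ with the original homotopies $H_{i+2},\dots,H_n$ gives $Z_2'$ of order $m_2 = n-i$ from $p$ to $\gamma_n$; the alternation is preserved since $\tilde H^{-1}$ has the direction opposite to the $H_i$ piece whose direction was inherited by the case-b output, hence the same direction as $H_{i+1}$, and thus alternates with $H_{i+2}$ as required. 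This gives $m_1 + m_2 = n$.

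The main technical obstacle will be carefully tracking both (a) the preservation of the invariant for positions $j < i$ when $\gamma_i$ is replaced at step $i$ (which requires using the nested-containment property from the prior step to ensure that the earlier minimality of $\gamma_{i-1}$ in the annulus to its right is inherited into the modified annulus $A(\gamma_{i-1},\tilde\gamma_i)$), and (b) the alternation of monotonicity directions through every splicing operation, especially in case~b where the split construction relies on the symmetric reverse use of the auxiliary $\tilde H$ that is only implicit in the proof of Proposition~\ref{Prop:zigzag_area_extend}.
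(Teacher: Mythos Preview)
Your forward-iteration idea is natural, and you correctly identify the crux: when Proposition~\ref{Prop:zigzag_area_extend} (case~a) replaces $\gamma_i$ by $\gamma_i'=\tilde\gamma$, the annulus $A(\gamma_{i-1},\gamma_i')$ is strictly \emph{larger} than $A(\gamma_{i-1},\gamma_i)$ (the auxiliary homotopy $\tilde H$ is appended to $H_i$, so $\gamma_i'$ moves further from $\gamma_{i-1}$). The problem is that your proposed repair --- invoking the nested-containment datum from the previous step --- does not suffice. Knowing that some minimizer of $A(\gamma_i,\gamma_{i+1})$ lies essentially in $A(\gamma_{i-1},\gamma_i)$ says nothing about essential curves that use the new region $A(\gamma_i,\gamma_i')$: that region sits on the \emph{opposite} side of $\gamma_i$ from $A(\gamma_i,\gamma_{i+1})$, so neither the minimality of $\gamma_i$ in $A(\gamma_i,\gamma_{i+1})$ nor the location of the old nested minimizer gives any length lower bound there. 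In general there is no reason $\gamma_{i-1}$ remains minimizing in $A(\gamma_{i-1},\gamma_i')$, and once that fails the invariant at $j=i-1$ (and hence the hypothesis you need to continue) is lost.

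This is exactly why the paper does \emph{not} proceed by a single left-to-right sweep. It argues by induction on $n$ and, after applying Proposition~\ref{Prop:zigzag_area_extend} at the right end (which may move $\gamma_{n-2}$), it \emph{re-applies the induction hypothesis to the initial order-$(n-2)$ segment} to rebuild the minimizing and nesting properties from scratch on $\gamma_1,\dots,\gamma_{n-3}$. The point is that the modification only touches $\gamma_{n-2}$, so $\gamma_1$ still minimizes in $A(\gamma_1,\gamma_2)$ and the inductive hypothesis is again available; after this second call, $\gamma_{n-2}$ and $\gamma_{n-1}$ are untouched, so the properties just obtained at the right end survive. Your scheme would need an analogous ``go back and re-fix everything to the left'' mechanism after each application of Proposition~\ref{Prop:zigzag_area_extend}; without it the invariant is not maintained and the argument breaks.
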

\begin{proof}
	We will prove this lemma by induction on $n$, and by using
	Proposition \ref*{Prop:zigzag_area_extend} and Proposition
	\ref*{Prop:zigzag_length_shorten}. If $n=2$, there is nothing
	to prove. If $n = 3$, then we apply Proposition
	\ref*{Prop:zigzag_area_extend} to $Z$, followed by applying
	Proposition \ref*{Prop:zigzag_length_shorten} to the final
	order $2$ zigzag.  If, during the process, we produce a zigzag
	of smaller order which ends at a constant curve, then we
	terminate this procedure.

	For the inductive step, we first apply the induction
	hypothesis to the order $n-1$ zigzag at the beginning of
	$Z$. If we obtain a zigzag of smaller order which ends at a
	constant curve, then we are in the second case of the lemma
	and we are done. Otherwise, we obtain a new zigzag $Z'$ where
	$\gamma_{n-2}$ is a minimizing geodesic in
	$A(\gamma_{n-2},\gamma_{n-1})$. Thus we are in a position to
	apply Proposition \ref*{Prop:zigzag_area_extend} to the order
	$3$ zigzag at the end of $Z'$, from $\gamma_{n-3}$ to
	$\gamma_n$. If we are in case $b$ of that proposition, we are
	done since we obtain a zigzag of smaller order ending at a
	constant curve.

	Otherwise, we obtain a new zigzag, which we replace into $Z'$ to yield $Z''$. Since $\gamma_{n-2}$ may have moved in the last step, it may be the case that $\gamma_{n-3}$ is not a minimizing geodesic in $A(\gamma_{n-3},\gamma_{n-2})$ anymore. In order to fix this, we apply the induction hypothesis once again, this time to the $n-2$ zigzag at the beginning of $Z''$, yielding yet another zigzag $Z'''$ (once again, we are done if we are in the second case of the lemma). Since $\gamma_{n-2}$ and $\gamma_{n-1}$ have not been changed in this last step, we still have that $\gamma_{n-2}$ is a minimizing geodesic in $A(\gamma_{n-2},\gamma_{n-1})$, and some minimizing geodesic in $A(\gamma_{n-1}, \gamma_{n})$ is contained and essential in $A(\gamma_{n-2}, \gamma_{n-1})$. Now, either $\gamma_{n-1}$ is a minimizing geodesic in $A(\gamma_{n-1},\gamma_n)$ and we are done, or we can apply Proposition \ref*{Prop:zigzag_length_shorten} to the final order $2$ zigzag of $Z'''$. The resulting zigzag fulfills all the properties of the lemma.
\end{proof}

We now have all the tools to prove our main theorems.

%\note{This proof is slightly inaccurate with respects to the $\varepsilon$.}

\begin{proof}[Proof of Theorem~\ref*{thm:disc}]
	Let $Z$ be a zigzag which satisfies the conclusion of Proposition \ref*{Prop:exists}.  If the order
	of $Z$ is equal to $1$, then the proof is finished.  As such, assume that $ord(Z) > 1$.  Since
	the zigzag must end at a constant curve, $ord(Z) \geq 3$. We may further assume that no other curve
	in the zigzag is a constant curve.  Additionally, since $\gamma_0 = \partial D$, $A(\gamma_1, \gamma_2)$ is contained
	in $A(\gamma_0, \gamma_1)$.  As a result, we can apply Proposition \ref*{Prop:zigzag_length_shorten} to replace $Z$
	with a zigzag with the property that $\gamma_1$ is minimizing in $A(\gamma_1, \gamma_2)$ (this also uses the fact that
	we cannot produce a zigzag of shorter order from $\partial D$ to a constant curve).  As a result, we may assume
	that $Z$ has this property.

	We now apply Lemma \ref*{Lem:zigzag_modification} to $Z$.  Since we cannot find a zigzag of smaller order which begins at $\partial D$
	and ends at a constant curve, the result is an order $n$ zigzag satisfying the conclusions of the lemma.  In particular, $\gamma_{n-1}$
	must be a minimizing geodesic in $A(\gamma_{n-1}, \gamma_n)$, but must not be a constant curve.
	However, $\gamma_n$ is a constant curve, and so $\gamma_{n-1}$ must also be constant, having length $0$.  This is a contradiction, completing the proof.
\end{proof}

We can use a very similar technique to prove Theorem \ref*{thm:sphere}:

\begin{proof}[Proof of Theorem~\ref*{thm:sphere}]
	To prove Theorem \ref*{thm:sphere}, we proceed in a similar way.  We first apply Theorem 1.2 from \cite{CLoptimalsphere}, which tells us that we can replace our sweepout
	$f$ of our Riemannian sphere $(S^2, g)$ of curves of length less than $L$ by a sweepout which contains only simple closed curves and constant curves.  Let this sweepout
	be parametrized by $[0,1]$, where $0$ and $1$ are mapped to the same constant curve.  Since it is smooth, we can find a finite number of subintervals $I_1, \dots, I_k$
	of $[0,1]$ such that the boundaries of $I_i$ are mapped to constant curves, the interior of $I_i$ is mapped to simple closed curves, and the degree of the map of $f$ restricted
	to $I_i$ is $d_i \neq 0$.  Furthermore, the sum of all of the degrees of these maps is equal to $1$, the degree of the map.  As a result, there is at least one such map that has odd degree.

	We now apply Proposition \ref*{Prop:exists} to this map to produce a zigzag which starts and ends at constant curves, and contains no other constant curves.  Since this zigzag is homotopic
	to the original map, it also has odd degree. Let $Z$ be a minimal zigzag of odd degree on the sphere which begins and ends at a constant curve, and only passes through simple closed curves, and which
	has minimal order.

	%Note that we can turn both into maps from $S^1$ to the space of curves by joining the initial and final constant curves through a sequence of constant curves.

	%If the degree of $Z$ is $1$, then we are done.  If the degree is $-1$, then we reverse the direction of $Z$ to obtain our result.  %

	%If the degree is not $\pm 1$, then the degree must be $\leq -3$ or $\geq 3$.

	If $Z$ has order one, we are done. Otherwise, $Z$ has order at least $3$, and we first apply Proposition~\ref*{Prop:sphere}, and then Lemma \ref*{Lem:zigzag_modification} to $Z$; if we can divide it into two zigzags (the second
	possibility of the lemma), then the concatenation is homotopic to $Z$, and so has odd degree as a map, and so one of the zigzags 
	must have odd degree as a map but order smaller than $n$, which contradicts the minimality of the order of $Z$.  If we are in the first conclusion of Lemma~\ref*{Lem:zigzag_modification}, then the result is homotopic
	to $Z$, and so has odd degree as a map.  As in the proof of Theorem \ref*{thm:disc},
	$\gamma_{n-1}$ must be a constant curve, as it must be a minimizing geodesic in $A(\gamma_{n-1}, \gamma_n)$, and $\gamma_n$ is a constant curve with length $0$.  Thus, the degree of the last
	segment of $Z$ is $1$, which contradicts the minimality of
	the order of $Z$. This completes the proof.
\end{proof}

\newpage

\subsection*{Acknowledgements}

This work was partially supported by NSF grants CCF-1614562 and CCF-1054779, NWO project no. 639.023.208, ANR project ANR-16-CE40-0009-01 (GATO), NSERC Discovery Grant RGPIN 217655-13, and NSERC Postdoctoral Fellowship PDF-487617-2016.

%\note{A: The end of the argument was very confusing to me so I changed it a bit.}

\bibliographystyle{amsplain}
\bibliography{biblio}

\end{document}